\newtheorem{theorem}{Theorem}[section]
\newtheorem{lemma}{Lemma}[section]
\newtheorem{remark}{Remark}[section]
\newtheorem{corollary}{Corollary}[section]
\newtheorem{proposition}{Proposition}[section]
\numberwithin{equation}{section}
\begin{document}
\title{Further properties of accretive matrices}
\author{Hamid Reza Moradi, Shigeru Furuichi and Mohammad Sababheh}
\subjclass[2010]{Primary 47A63, Secondary 46L05, 47A60.}
\keywords{Accretive matrix, operator monotone function, Choi-Davis inequality, mean of accretive matrices, operator matrix related to accretive matrices, entropy.}
\maketitle
\begin{abstract}
To better understand the algebra $\mathcal{M}_n$ of all $n\times n$ complex matrices, we explore the class of accretive matrices. This class has received renowned attention in recent years due to its role in complementing those results known for positive definite matrices.
More precisely, we have several results that allow a better understanding of accretive matrices. Among many results, we present order-preserving results, Choi-Davis-type inequalities, mean-convex inequalities, sub-multiplicative results for the real part, and new bounds of the absolute value of accretive matrices. These results will be compared with the existing literature. In the end, we quickly pass through related entropy results for accretive matrices.
\end{abstract}
%------------------------------------------------------------------------------------%
\pagestyle{myheadings}
\markboth{\centerline {}}
{\centerline {}}
\bigskip
\bigskip
%------------------------------------------------------------------------------------%
%------------------------------------------------------------------------------------%
\section{Introduction}
Let $\mathcal{M}_n$ be the class of all $n\times n$ complex matrices. Inequalities among elements of $\mathcal{M}_n$ has been an active research area due to its applications in various fields, not to mention its role in understanding the algebra $\mathcal{M}_n$.

However, order among elements in $\mathcal{M}_n$ is restricted to the so-called Hermitian matrices. A matrix $A\in\mathcal{M}_n$ is said to be Hermitian if $A^*=A$, where $A$ is the conjugate transpose of $A$. A special class of the Hermitian matrices is the positive ones. We recall that a matrix $A\in\mathcal{M}_n$ is said to be positive semi-definite, and we write $A\geq 0,$ if it satisfies $\left<Ax,x\right>\geq 0,$ for all $x\in\mathbb{C}^n$, where $\left<\cdot,\cdot\right>$ denotes the usual inner product in $\mathbb{C}^n$. The notation $\mathcal{M}_n^+$ will denote the class of positive semidefinite matrices in $\mathcal{M}_n$. Further, if $A\in\mathcal{M}_n^+$ is invertible, we say that $A$ is positive definite, and we write $A\in\mathcal{P}_n$ or $A>0$.
Having defined $\mathcal{M}_n^+$, a partial order on $\mathcal{H}_n$, the class of all Hermitian matrices in $\mathcal{M}_n$, can be defined. For $A,B\in\mathcal{H}_n$, we say that $A\leq B$ if $B-A\geq 0.$ If $B-A>0$, then we write $B>A$.

Defining this order on $\mathcal{H}_n$ then proposes the question about possible functional ordering in a way that simulates the field of real numbers. For example, if $f:J\to\mathbb{R}$ is an increasing function on the interval $J$ then $f(a)\leq f(b)$ for any $a,b\in J$ satisfying $a\leq b.$ The natural question then arises about the validity of the conclusion $f(A)\leq f(B)$ when $A,B\in\mathcal{H}_n$ are such that $A\leq B$. This turns out to be much more complicated.

For $A\in\mathcal{H}_n$, let $\sigma(A)$ denote the spectrum of $A$. An interval $J$ containing $\sigma(A)$ will be denoted as $J_{A}$. If $f:J_A\to \mathbb{R}$ is a given function, then $f(A)$ is defined via the simple identity $f(A)=U{\text{diag}}(f(\lambda_i))U^^*,$ where $U{\text{diag}}(\lambda_i)U^*$ is a spectral decomposition of $A$, in which $U$ is unitary and $\{\lambda_i:i=1,\cdots,n\}=\sigma(A).$

It is unfortunate that a monotone increasing function $f:J_{A,B}\to \mathbb{R}$ does not satisfy $f(A)\leq f(B)$ even when $A,B\in\mathcal{H}_n$ are such that $A\leq B$. This unpleasant scenario can be also said about convex functions.

This urges the search for possible classes of functions or matrices that could satisfy matrix inequalities as in the scalar case. For this, operator monotone functions were defined as those functions preserving order among Hermitian matrices. That is, a function $f:J\to\mathbb{R}$ is said to be operator monotone if $f(A)\leq f(B)$ for any $A,B\in\mathcal{H}_n$ are such that $A\leq B$ and $\sigma(A),\sigma(B)\subset J.$ Further, $f$ will be called operator convex if $f((1-t)A+tB)\leq (1-t)f(A)+tf(B)$ for all $t\in [0,1]$, where $A,B\in \mathcal{H}_n$ are such that $\sigma(A),\sigma(B)\subset J.$ If $-f$ is operator monotone, it is said to be operator monotone decreasing, and if $-f$ is operator convex it is said to be operator concave.

We refer the reader to \cite[Chapter V]{bhatia} for an excellent discussion of operator monotone and operator convex functions. We also refer the reader to \cite{1, choi, davis, book1, 10, kar, 5, n1, 12, n2, bel, hac} for a good list of references treating matrix orders.

In recent years, more interest has grown in studying inequalities among the so-called accretive matrices. Recall that a matrix $A\in\mathcal{M}_n$ is said to be accretive if ${\mathfrak R}(A)>0$, where $\mathfrak{R}(A)$ is the real part of $A$ defined by $\mathfrak{R}(A)=\frac{A+A^*}{2}.$ The class of accretive matrices in $\mathcal{M}_n$ will be denoted by $\Pi_n$. It is clear that $\mathcal{P}_n\subset \Pi_n$. Since elements of $\Pi_n$ are not Hermitian, the predefined order does not apply to $\Pi_n$. This is why inequalities among accretive matrices are usually stated in terms of their real parts. We must introduce sectorial matrices to deal with inequalities in $\Pi_n$. If $0\leq \alpha<\frac{\pi}{2},$ and if $A\in\mathcal{M}_n$ is such that
$$\{\left<Ax,x\right>:x\in\mathbb{C}^n,\|x\|=1\}\subset \{z\in\mathbb{C}:\mathfrak{R}(z)>0, |\mathfrak{I}(z)|\leq(\tan\alpha) \mathfrak{R}(z)\},$$ then $A$ will be called a sectorial matrix and we simply write $A\in\Pi_{n}^{\alpha}$, where $\mathfrak{I}(z)$ denotes the imaginary part of $z$. We refer the reader to \cite{1, bedann, bedlama, bedrim, drury,  johnson, Lin1, Mathias1992, rais, sano, Tan1} for an almost comprehensive overview of the progress that has been made in studying inequalities in $\Pi_n$. We emphasize here that whenever we use the notation $\Pi_n^{\alpha}$ in this paper, we implicitly understand that $0\leq \alpha<\frac{\pi}{2}.$ We also remark that  a matrix is accretive if and only if it is sectorial \cite{bedrim}.

The study of accretive matrices differs from that of Hermitian matrices because a partial order among members of $\Pi_n$ is not as well established as that in $\mathcal{H}_n$. So, in studying inequalities among members of $\Pi_n$, we usually refer to the real parts of these elements, noting that the real part of any matrix is in $\mathcal{H}_n$.

Our target in this paper is to study further possible inequalities among matrices in $\Pi_n$, where we extend some of the well-established inequalities in $\mathcal{P}_n$ or $\mathcal{M}_n^+$ to the class $\Pi_n$. For this to be done, we first need to define $f(A)$ where $A\in\Pi_n$ and $f:J_{A}\to\mathbb{R}.$

Given $A\in\mathcal{M}_n$, let $f:\mathcal{D}\to\mathbb{C}$ be a complex-valued function defined on a domain that contains $\sigma(A)$ in its interior. If $f$ is analytic in $\mathcal{D}$, we define
\begin{equation}\label{eq_int}
f(A)=\frac{1}{2\pi i}\int_{C}f(z)(zI-A)^{-1}dz,
\end{equation}
where $C$ is any simple closed curve in $\mathcal{D}$ that surrounds $\sigma(A)$. Practically, this generalizes the well-known complex Cauchy integral formula.

Now if $A\in\Pi_n$, then $\sigma(A)\cap (-\infty,0]=\varnothing.$ Therefore, if $f$ is analytic in any domain that avoids the negative $x-$axis, then $f(A)$ can be defined via \eqref{eq_int}.
For simplicity, we will use the notation
$$ \mathfrak{m}=\{f:(0,\infty)\to (0,\infty); f\; {\text{is a matrix monotone function with}} \;f(1)=1\}.$$
The following lemmas deserve mentioning here.
\begin{lemma}\label{hansen_lem} \cite[Theorem 4.9]{hansen1}
Let $f\in \mathfrak{m}.$ Then
$$f(x)=\int_{0}^{1}(1!_tx)d\nu_f(t),$$ where $\nu_f$ is a probability measure on $[0,1]$ and $1!_tx=(1-t+tx^{-1})^{-1}.$
\end{lemma}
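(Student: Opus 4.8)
The plan is to derive the stated formula from the classical integral (Löwner) representation of operator monotone functions on $(0,\infty)$, after recognizing that the elementary kernels appearing there are exactly the weighted harmonic means $1\,!_t\,x$. First I would recall that, being operator monotone and positive on $(0,\infty)$, $f$ extends to a Pick (Nevanlinna) function analytic across the positive semi-axis; hence its representing measure is carried by $(-\infty,0]$, and after the substitution $\lambda=-s$ one obtains a representation of the shape
$$f(x)=a+bx+\int_{(0,\infty)}\frac{x}{x+s}\,d\mu(s),$$
with $a,b\ge 0$ and $\mu$ a positive measure on $(0,\infty)$ of finite total mass (the finiteness being a consequence of the normalization $f(1)=1$). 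The nonnegativity of the coefficients is forced by the positivity and monotonicity of $f$: one reads off $a=f(0^{+})\ge 0$ and $b=\lim_{x\to\infty}f(x)/x\ge 0$.

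Next I would perform the reparametrization $t=s/(1+s)$, which sends $(0,\infty)$ bijectively onto $(0,1)$ and turns the kernel into a harmonic mean:
$$\frac{x}{x+s}=(1-t)\,\frac{x}{(1-t)x+t}=(1-t)\bigl(1\,!_t\,x\bigr),\qquad t=\frac{s}{1+s}.$$
At the two endpoints the remaining terms fit the same pattern, since $1\,!_0\,x=1$ and $1\,!_1\,x=x$; thus the constant term $a$ is absorbed as an atom at $t=0$ and the linear term $bx$ as an atom at $t=1$. Pushing $\mu$ forward under $s\mapsto t$ (and rescaling by the factor $1-t$) and adjoining these two atoms produces a single positive measure $\nu_f$ on $[0,1]$ with
$$f(x)=\int_{[0,1]}\bigl(1\,!_t\,x\bigr)\,d\nu_f(t).$$

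Finally, to see that $\nu_f$ is a probability measure I would simply evaluate at $x=1$. Since $1\,!_t\,1=(1-t+t)^{-1}=1$ for every $t\in[0,1]$, the identity above gives $f(1)=\nu_f([0,1])$; the normalization $f(1)=1$ then yields $\nu_f([0,1])=1$, as required.

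I expect the principal obstacle to be the first step: quoting the Löwner/Pick representation in precisely the form above and justifying the nonnegativity of $a$ and $b$ together with the finiteness of $\mu$ — that is, verifying that the positivity constraint $f\colon(0,\infty)\to(0,\infty)$ (not merely real-valuedness) eliminates the subtraction terms present in the general Nevanlinna representation and leaves only the clean harmonic-mean kernel. Once the representation is in this normalized positive form, the reparametrization and the endpoint bookkeeping are routine, and the probability-measure conclusion is immediate from the evaluation at $x=1$.
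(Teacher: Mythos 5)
Your overall strategy is sound, and it is in fact the standard way to deduce this lemma from classical theory: start from the L\"owner--Nevanlinna representation of a positive operator monotone function on $(0,\infty)$, recognize the kernel as a weighted harmonic mean via the substitution $t=s/(1+s)$, absorb the affine part $a+bx$ as atoms at $t=0$ and $t=1$, and read off the normalization from $f(1)=1$. Note that the paper does not prove this lemma at all --- it simply cites Hansen's Theorem 4.9, whose own proof is a self-contained ``fast track'' construction of the representation that avoids the Nevanlinna machinery altogether; so your route is genuinely different from (and analytically heavier than) the source the paper leans on, but it is a legitimate one.

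There is, however, one concrete error: the claim that $\mu$ has finite total mass, and that this finiteness follows from $f(1)=1$, is false. With the kernel $\frac{x}{x+s}$ and a finite $\mu$, the integral term is bounded by $\mu((0,\infty))$ uniformly in $x$, so $f(x)-bx$ would have to remain bounded as $x\to\infty$; this already fails for $f(x)=\sqrt{x}$, whose representing measure in your normalization is $d\mu(s)=\pi^{-1}s^{-1/2}\,ds$, which has infinite total mass. What $f(1)=1$ actually yields is the weaker (and standard) integrability condition $\int_{(0,\infty)}\frac{d\mu(s)}{1+s}\le 1$. Fortunately, this is exactly what your construction needs: the measure you push forward to $[0,1]$ is not $\mu$ itself but $(1-t)\,d\mu$, i.e.\ $\frac{d\mu(s)}{1+s}$, so $\nu_f$ is finite, and evaluating at $x=1$ (where $1\,!_t\,1\equiv 1$) still gives $\nu_f([0,1])=f(1)=1$. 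With the representation quoted in its correct form --- $f(x)=a+bx+\int_{(0,\infty)}\frac{x}{x+s}\,d\mu(s)$ with $a,b\ge 0$ and $\int_{(0,\infty)}\frac{d\mu(s)}{1+s}<\infty$, equivalently a finite measure integrated against the kernel $\frac{(1+s)x}{x+s}$ --- your argument goes through verbatim.
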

\begin{lemma}\cite[Theorem V.4.7]{bhatia}
Let $f\in\mathfrak{m}$. Then $f$ has an analytic continuation to $\mathbb{C}\backslash (-\infty,0]$.
\end{lemma}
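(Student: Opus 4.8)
The plan is to build the continuation directly from the integral representation in Lemma \ref{hansen_lem}. First I would rewrite the integrand in a form that displays its analytic structure: a short computation gives
$$1!_t x = (1-t+tx^{-1})^{-1} = \frac{x}{(1-t)x+t},$$
so that, reading $x$ as a complex variable $z$, the natural candidate for the continuation is
$$\tilde{f}(z) = \int_0^1 \frac{z}{(1-t)z+t}\, d\nu_f(t), \qquad z\in\mathbb{C}\setminus(-\infty,0].$$
For each fixed $t\in[0,1]$ the M\"obius map $g_t(z) = z/((1-t)z+t)$ is rational with its only pole at $z=-t/(1-t)$ when $t\in(0,1)$, and with no pole at all when $t\in\{0,1\}$ (there $g_0\equiv 1$ and $g_1(z)=z$). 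Since $-t/(1-t)$ runs through $(-\infty,0)$ as $t$ runs through $(0,1)$, every such pole lies on the slit $(-\infty,0]$, and hence each $g_t$ is holomorphic on $\mathbb{C}\setminus(-\infty,0]$.

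The substantive step is to show that integrating this family against the probability measure $\nu_f$ preserves holomorphy, and I would argue by Morera's theorem. Fix a triangle $\Delta$ whose closed interior is contained in $\mathbb{C}\setminus(-\infty,0]$; because the slit domain is simply connected, $\partial\Delta$ bounds a region inside the domain of holomorphy of every $g_t$. Interchanging the contour integral with the $\nu_f$-integral and invoking Cauchy's theorem for each $g_t$,
$$\oint_{\partial\Delta}\tilde{f}(z)\,dz = \int_0^1\Bigl(\oint_{\partial\Delta} g_t(z)\,dz\Bigr)d\nu_f(t) = \int_0^1 0\, d\nu_f(t) = 0.$$
Continuity of $\tilde{f}$ together with the vanishing of all such triangle integrals then yields, via Morera's theorem, that $\tilde{f}$ is holomorphic on $\mathbb{C}\setminus(-\infty,0]$. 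Finally, $\tilde{f}$ agrees with $f$ on $(0,\infty)$ by Lemma \ref{hansen_lem}, so $\tilde{f}$ is the asserted analytic continuation.

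The point that needs genuine care, and which I expect to be the main obstacle, is the justification of the Fubini interchange and of the continuity of $\tilde{f}$; both rest on a uniform bound for the integrand. On a compact set $K\subset\mathbb{C}\setminus(-\infty,0]$ the function $(z,t)\mapsto(1-t)z+t$ is continuous and nonvanishing on the compact set $K\times[0,1]$ — it can only vanish at $z=-t/(1-t)\in(-\infty,0]$, which $K$ avoids, and it equals $1$ when $t=1$ — so $|(1-t)z+t|\geq\delta$ for some $\delta>0$. Hence $|g_t(z)|\leq (\max_{z\in K}|z|)/\delta$ uniformly in $t$, and since $\nu_f$ is a finite (probability) measure, dominated convergence legitimizes both the continuity of $\tilde{f}$ and the exchange of integrals above. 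Once this uniform estimate is in place, the remainder of the argument is routine.
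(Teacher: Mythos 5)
Your proof is correct, but it takes a genuinely different route from the paper: the paper offers no argument at all for this lemma, simply citing \cite[Theorem V.4.7]{bhatia}, whose proof runs through L\"{o}wner's theory --- operator monotone functions are Pick functions, and the continuation comes out of the Nevanlinna--Pick integral representation together with the reflection $f(\bar z)=\overline{f(z)}$ across $(0,\infty)$. You instead bootstrap from Hansen's representation, which the paper has already recorded as Lemma \ref{hansen_lem}: writing $1!_t x=\frac{x}{(1-t)x+t}$, you continue each M\"obius factor (whose only pole $-t/(1-t)$ sits on the slit) and then push holomorphy through the $\nu_f$-integral by the standard Morera--Fubini--dominated-convergence argument, with the key uniform lower bound $|(1-t)z+t|\ge\delta$ on $K\times[0,1]$ correctly identified and justified by compactness. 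The trade-off is instructive. Bhatia's route delivers strictly more --- the continuation maps the open upper half-plane into itself, which is the full content of L\"{o}wner's theorem --- whereas your construction only asserts holomorphy on the slit plane, which is all the present paper needs for the functional calculus \eqref{eq_int}. In exchange, your argument is self-contained modulo Lemma \ref{hansen_lem}, uses nothing beyond first-year complex analysis, and meshes well with the paper's toolkit (indeed, Hansen's representation in \cite{hansen1} is itself the engine of his ``fast track'' proof of L\"{o}wner's theorem, so your derivation is essentially the natural local shortcut). One cosmetic remark: at $t=0$ the formula $g_0(z)=z/z$ has a removable singularity at $z=0$, but since $0\notin\mathbb{C}\setminus(-\infty,0]$ this is harmless, exactly as you treat it.
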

Thus, if $f\in\mathfrak{m},$ we may deal with its analytic continuation to find $f(A)$ for any matrix $A$ whose spectrum avoids the negative $x-$axis, where we can use \eqref{eq_int}.
Matrix monotone functions and matrix concave functions are strongly related, as follows \cite[Theorem 2.4]{ Uchiyama} and \cite[Theorems 2.1, 2.3, 3.1, 3.7]{ando_hiai}.
\begin{proposition}\label{oper_intro_prop}
Let $f:(0,\infty)\to [0,\infty)$ be continuous. Then
\begin{enumerate}
\item[(i)] $f$ is matrix monotone decreasing if and only if $f$ is matrix convex and $f(\infty)<\infty$.
\item[(ii)] $f$ is matrix monotone increasing if and only if $f$ is matrix concave.
\end{enumerate}
\end{proposition}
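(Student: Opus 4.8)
The plan is to reduce both statements to the Löwner integral representation of operator monotone functions, the nonnegative, normalized case of which is recorded in Lemma \ref{hansen_lem}. The pivotal observation is that the weighted harmonic means $1!_tx=(1-t+tx^{-1})^{-1}$ appearing there are simultaneously operator monotone increasing and operator concave in $x$: for $t<1$ one may write $1!_tx=\tfrac1{1-t}-\tfrac{t/(1-t)^2}{x+t/(1-t)}$, exhibiting each kernel as an affine shift of $-(x+s)^{-1}$ with $s\ge 0$ (the case $t=1$ giving the affine function $x$), and $(x+s)^{-1}$ is the prototypical operator convex, operator monotone decreasing function, see \cite[Chapter V]{bhatia}. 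So I would first record these two properties of the kernel, since everything else is built from them.

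For the forward implication in (ii) I would then simply integrate. Discarding the trivial case $f\equiv 0$ (which is harmless, as operator monotone functions are real analytic and cannot vanish on a subinterval without vanishing identically), a nonnegative operator monotone $f$ has the Löwner form $f(x)=\alpha+\beta x+\int \frac{\lambda x}{\lambda+x}\,d\mu(\lambda)$ with $\alpha,\beta\ge 0$ and $\mu\ge 0$, equivalently Lemma \ref{hansen_lem} after normalizing by $f(1)$. Each summand is operator concave by the previous step, and operator concavity is preserved under nonnegative linear combinations, integration against a positive measure, and pointwise limits; hence $f$ is operator concave.

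The substantial direction is the converse in (ii): a nonnegative, operator concave $f$ must be operator monotone. Here I would invoke the integral representation of operator concave functions on $(0,\infty)$, that is, the negative of the operator convex representation of \cite[Chapter V]{bhatia}. Its building blocks split into two types: operator monotone pieces (constants, $\beta x$ with $\beta\ge 0$, and the kernels above) and pieces that are concave but unbounded below, the extreme case being $-\gamma x^2$ with $\gamma\ge 0$. The whole point is that the hypothesis $f\ge 0$ on the entire half-line forces the coefficients of all pieces of the second type to vanish, collapsing the representation to the Löwner form and making $f$ operator monotone. Making this collapse precise, namely identifying exactly which extreme rays of the operator concave cone violate global nonnegativity and showing the representing measure must avoid them, is the main obstacle; an alternative is to run the Hansen--Pedersen form of Jensen's operator inequality \cite{hansen1} for concave $f$ with $f(0^+)\ge 0$, but one must use its unitary-doubling version rather than a single compression, since substituting one contraction $V$ with $A=V^*BV$ produces $f(A)\ge V^*f(B)V$, an inequality in the wrong direction for deducing $f(A)\le f(B)$.

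Finally I would deduce (i) from (ii) through the order-reversing substitution $g(x)=f(x^{-1})$. Since $x\mapsto x^{-1}$ is operator monotone decreasing, $f$ is operator monotone decreasing exactly when $g$ is operator monotone increasing, and $g$ is nonnegative with $f(\infty)=g(0^+)$. By part (ii), $g$ is operator monotone increasing if and only if $g$ is operator concave, so (i) reduces to the single assertion that, for nonnegative $f$, operator concavity of $g(x)=f(x^{-1})$ is equivalent to operator convexity of $f$ together with $f(\infty)<\infty$. I would prove this equivalence by feeding representations through the substitution: the Löwner kernels $\frac{\lambda x}{\lambda+x}$ of an operator monotone $g$ become $\frac{1}{x+\lambda^{-1}}$ and the term $\beta x$ becomes $\beta x^{-1}$, all operator convex and bounded at infinity, with constant term equal to $g(0^+)=f(\infty)$; conversely $f(\infty)<\infty$ is precisely the condition that annihilates the operator convex yet unbounded components of $f$ (such as $\beta x$ or $\gamma x^2$, as witnessed by the non-monotone example $f(x)=x^2$), leaving only components whose composition with inversion is operator concave. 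Matching the two representations term by term and checking the boundary behaviour at $0$ and $\infty$ is the routine but slightly delicate bookkeeping I would finish with.
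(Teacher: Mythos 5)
The paper offers no proof of Proposition \ref{oper_intro_prop} to compare against: it is quoted directly from \cite[Theorem 2.4]{Uchiyama} and \cite[Theorems 2.1, 2.3, 3.1, 3.7]{ando_hiai}. So your argument must stand on its own, and at present it does not: it is a well-informed plan whose two substantive implications are explicitly deferred. The forward half of (ii) is complete and correct --- a nonnegative operator monotone function is a positive average of the concave kernels $1!_tx$ (Lemma \ref{hansen_lem}, or the L\"owner representation), and operator concavity survives such averages. But the converse of (ii), that a nonnegative operator concave function is operator monotone, is the heart of the proposition, and there you only say that making the representation ``collapse'' precise is ``the main obstacle''; likewise the equivalence you reduce (i) to --- $f(x^{-1})$ operator concave iff $f$ operator convex with $f(\infty)<\infty$ --- is left as ``delicate bookkeeping''. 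Those are exactly the places where the work lies (and the extreme-ray route is genuinely awkward: the representing measure of an operator convex function on $(0,\infty)$ need not be finite, so term-by-term matching requires integrability care at $0$ and $\infty$). As written, this is a genuine gap, not a routine omission.

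Both gaps close with an elementary convex-combination trick that avoids integral representations entirely (this is essentially how \cite[Chapter V]{bhatia} argues). For the converse of (ii): if $0<A\le B$, $\varepsilon>0$ and $0<t<1$, then
\begin{equation*}
t(B+\varepsilon I)=t\,A+(1-t)\,\tfrac{t}{1-t}\bigl(B+\varepsilon I-A\bigr),
\end{equation*}
where the second matrix is $\ge \tfrac{t\varepsilon}{1-t}I>0$, hence in the domain; operator concavity and $f\ge 0$ give $f\bigl(t(B+\varepsilon I)\bigr)\ge t f(A)+(1-t)f\bigl(\tfrac{t}{1-t}(B+\varepsilon I-A)\bigr)\ge t f(A)$, and letting $t\to 1^-$ and then $\varepsilon\to 0^+$ yields $f(B)\ge f(A)$. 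For the missing direction of (i): a scalar convex function with a finite limit at $+\infty$ is nonincreasing (a chord of positive slope would force $f(x)\to\infty$), so $0\le f(W)\le f(m)I$ whenever $W\ge mI>0$; writing $B=\lambda A+(1-\lambda)W_\lambda$ with $W_\lambda=A+\tfrac{1}{1-\lambda}(B-A)\ge A\ge mI$, $m=\lambda_{\min}(A)$, operator convexity gives $f(B)\le \lambda f(A)+(1-\lambda)f(W_\lambda)$, and $(1-\lambda)f(W_\lambda)\to 0$ as $\lambda\to 1^-$, so $f(B)\le f(A)$. The remaining implication of (i) goes through exactly as you sketched, and more cheaply than you fear: $f(\infty)<\infty$ is automatic for a nonnegative nonincreasing function, and applying Lemma \ref{hansen_lem} to $g(x)=f(x^{-1})$ (discarding the trivial case $f\equiv 0$ and normalizing) gives $f(x)=f(1)\int_0^1 (1-t+tx)^{-1}\,d\nu(t)$, a positive average of the operator convex, bounded-at-infinity functions $(1-t+tx)^{-1}$, so $f$ is operator convex with $f(\infty)=f(1)\nu(\{0\})<\infty$. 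With these three pieces inserted, your outline becomes a complete proof.
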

Consequently, $f\in\mathfrak{m}$ means that $f$ is operator monotone and operator concave.

On the other hand, the following two lemmas from \cite{1} will be needed in the sequel.
\begin{lemma}\label{lem_f1} Let $ f\in \mathfrak{m} $ and $ A\in\Pi_n$. Then
\begin{equation*}
\mathfrak R (f(A))\geq\ f(\mathfrak R A).
\end{equation*}
Consequently, if $A$ is accretive, then so is $f(A)$.
\end{lemma}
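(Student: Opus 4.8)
The plan is to combine Hansen's integral representation (Lemma \ref{hansen_lem}) with a pointwise inequality for the weighted harmonic mean. First I would extend the scalar identity $f(x)=\int_0^1(1!_t x)\,d\nu_f(t)$ to the matrix argument $A\in\Pi_n$. Since for each fixed $t$ the representing function $g_t(x)=(1-t+tx^{-1})^{-1}$ is analytic on $\mathbb{C}\setminus(-\infty,0]$, the analytic functional calculus \eqref{eq_int} applies; interchanging the Cauchy integral with the integration in $t$ (justified by the boundedness of the integrand on the contour) yields
\begin{equation*}
f(A)=\int_0^1\big((1-t)I+tA^{-1}\big)^{-1}\,d\nu_f(t).
\end{equation*}
Taking real parts, and using that $X\mapsto\mathfrak{R}(X)$ is real-linear and continuous while the ordinary functional calculus gives $f(\mathfrak{R}A)=\int_0^1((1-t)I+t(\mathfrak{R}A)^{-1})^{-1}\,d\nu_f(t)$, the claim $\mathfrak{R}(f(A))\ge f(\mathfrak{R}A)$ reduces to the pointwise estimate $\mathfrak{R}[((1-t)I+tA^{-1})^{-1}]\ge((1-t)I+t(\mathfrak{R}A)^{-1})^{-1}$ for every $t\in[0,1]$.

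Second I would prove this pointwise inequality. The endpoints $t=0,1$ give equality, so I fix $t\in(0,1)$, set $s=(1-t)/t$, and after scaling by $1/t$ the inequality becomes $\mathfrak{R}[(A^{-1}+sI)^{-1}]\ge(R^{-1}+sI)^{-1}$, where $R=\mathfrak{R}(A)$. Writing $A=R+iS$ with $S=\mathfrak{I}(A)$ Hermitian, the workhorse will be the identity
\begin{equation*}
\mathfrak{R}(A^{-1})=A^{-1}R(A^*)^{-1},
\end{equation*}
obtained by factoring $A^{-1}+(A^*)^{-1}=A^{-1}(A+A^*)(A^*)^{-1}$. Abbreviating $P=\mathfrak{R}(A^{-1})$ and $Q=\mathfrak{I}(A^{-1})=-A^{-1}S(A^*)^{-1}$, and applying the congruence identity $(\mathfrak{R}B)^{-1}=\mathfrak{R}(B^{-1})+\mathfrak{I}(B^{-1})(\mathfrak{R}(B^{-1}))^{-1}\mathfrak{I}(B^{-1})$ to $B=(A^{-1}+sI)^{-1}$, the target (by order-reversal of inversion) becomes equivalent to $R^{-1}-P\ge Q(P+sI)^{-1}Q$.

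Third I would clear denominators by a congruence with $A$: multiplying on the left by $A$ and on the right by $A^*$ turns the last display into $SR^{-1}S\ge S(A^*)^{-1}(P+sI)^{-1}A^{-1}S$, using $A(R^{-1}-P)A^*=SR^{-1}S$. It then suffices to establish the inner inequality $A^*R^{-1}A\ge(P+sI)^{-1}$ and sandwich it by the Hermitian factor $S$. Here the computation $A^*R^{-1}A=R+SR^{-1}S$, combined with the inverse of the workhorse identity, namely $P^{-1}=A^*R^{-1}A=R+SR^{-1}S$, shows that the left-hand side is exactly $P^{-1}$, so the inequality collapses to $P^{-1}\ge(P+sI)^{-1}$, which is immediate since $s\ge0$. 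The stated consequence follows at once: $\mathfrak{R}(A)>0$ and $f\big((0,\infty)\big)\subset(0,\infty)$ give $f(\mathfrak{R}A)\in\mathcal{P}_n$, whence $\mathfrak{R}(f(A))\ge f(\mathfrak{R}A)>0$, so $f(A)$ is accretive.

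I expect the main obstacle to be the bookkeeping in the second and third steps that reduces $\mathfrak{R}[(A^{-1}+sI)^{-1}]\ge(R^{-1}+sI)^{-1}$ to the transparent $P^{-1}\ge(P+sI)^{-1}$; the crucial—and slightly surprising—point is that the two congruence identities for $\mathfrak{R}(A^{-1})$ are inverses of one another, which is precisely what forces the middle terms to cancel. A secondary technical matter worth checking carefully is the interchange of the two integrals in the first step, i.e.\ that Hansen's representation survives the passage from positive definite to merely accretive arguments through the analytic functional calculus.
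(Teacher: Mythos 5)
The paper itself contains no proof of this lemma: it is quoted directly from \cite{1} (``the following two lemmas from \cite{1} will be needed in the sequel''), so there is no in-paper argument to compare against. Your proposal is correct, and it reconstructs essentially the argument used in the cited source: extend Hansen's representation (Lemma \ref{hansen_lem}) to accretive arguments via the analytic functional calculus \eqref{eq_int}, then reduce the claim to the pointwise harmonic-mean inequality $\mathfrak{R}\bigl[((1-t)I+tA^{-1})^{-1}\bigr]\ge\bigl((1-t)I+t(\mathfrak{R}A)^{-1}\bigr)^{-1}$ and integrate against the probability measure $\nu_f$. Your algebra for the pointwise inequality checks out: the two identities $\mathfrak{R}(A^{-1})=A^{-1}(\mathfrak{R}A)(A^{*})^{-1}$ and $(\mathfrak{R}(A^{-1}))^{-1}=A^{*}(\mathfrak{R}A)^{-1}A=\mathfrak{R}A+\mathfrak{I}A(\mathfrak{R}A)^{-1}\mathfrak{I}A$ are indeed mutually inverse congruences, the reduction to $R^{-1}-P\ge Q(P+sI)^{-1}Q$ is an equivalence because $X\mapsto AXA^{*}$ preserves and reflects the order for invertible $A$, and the final collapse to $P^{-1}\ge(P+sI)^{-1}$ is immediate; the sandwiching by the (possibly singular) Hermitian factor $S$ only needs the non-strict order, which is all you use. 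The one point requiring care, which you correctly flag, is the interchange of the Cauchy contour integral with $d\nu_f(t)$; this is legitimate since for $z$ on a compact contour avoiding $(-\infty,0]$ the integrand $z/((1-t)z+t)$ is bounded uniformly in $t\in[0,1]$ (its pole $-t/(1-t)$ lies on the negative axis), and the scalar identity extends from $(0,\infty)$ to $\mathbb{C}\setminus(-\infty,0]$ by the identity theorem. The deduction of accretivity of $f(A)$ from $f((0,\infty))\subset(0,\infty)$ is also correct.
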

\begin{lemma}\label{lem_f2} Let $ f\in \mathfrak{m}$ and $ A\in\Pi_n^{\alpha}.$ Then
\begin{equation*}
\mathfrak R (f(A))\leq\ \sec^{2}(\alpha) \;f(\mathfrak R A).
\end{equation*}
\end{lemma}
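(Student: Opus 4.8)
The plan is to reduce the statement, via Hansen's integral representation, to a pointwise inequality for weighted harmonic means, and then to a single concrete matrix inequality in which the sectorial constraint can be used transparently. First, by Lemma \ref{hansen_lem} together with the analytic continuation of $f$, for $A\in\Pi_n^{\alpha}$ one has $f(A)=\int_0^1 (1!_t A)\,d\nu_f(t)$, where each $1!_t A=((1-t)I+tA^{-1})^{-1}$ is accretive (its real part is positive). Since $\mathfrak R(\cdot)$ is real-linear and continuous, it commutes with the integral, so $\mathfrak R(f(A))=\int_0^1 \mathfrak R(1!_t A)\,d\nu_f(t)$ while $f(\mathfrak R A)=\int_0^1 (1!_t \mathfrak R A)\,d\nu_f(t)$. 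As $\nu_f$ is a positive measure, it therefore suffices to prove, for each fixed $t$, the pointwise estimate $\mathfrak R(1!_t A)\le \sec^2\alpha\,(1!_t \mathfrak R A)$; integrating against $d\nu_f$ then gives the lemma. The cases $t=0,1$ are immediate, so I would fix $t\in(0,1)$ and set $B=\mathfrak R A>0$.

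Second, I would pass to a resolvent form. Put $G=(1-t)A+tI$, so that $\mathfrak R G=(1-t)B+tI=:H>tI$, and the estimate $\pm\mathfrak I G=\pm(1-t)\mathfrak I A\le(1-t)\tan\alpha\,B\le\tan\alpha\,H$ shows $G\in\Pi_n^{\alpha}$. A direct computation gives $1!_t A=\tfrac{1}{1-t}(I-tG^{-1})$, hence $\mathfrak R(1!_t A)=\tfrac{1}{1-t}(I-t\,\mathfrak R(G^{-1}))$, and similarly $1!_t B=\tfrac{1}{1-t}(I-tH^{-1})$. Writing the sectorial decomposition $G=H^{1/2}(I+iT)H^{1/2}$ with $T=H^{-1/2}\mathfrak I(G)H^{-1/2}$ Hermitian, one obtains the clean identity $\mathfrak R(G^{-1})=H^{-1/2}(I+T^2)^{-1}H^{-1/2}$. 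Substituting, conjugating the desired inequality by $H^{1/2}$, using $H=t(I-N)^{-1}$ and dividing by $t$, reduces everything to
\[
\tan^2\alpha\,(I-N)^{-1}+(I+T^2)^{-1}\ \ge\ \sec^2\alpha\,I,
\]
where $N=I-tH^{-1}$ satisfies $0<N<I$ and the sectoriality of $G$ translates into exactly $\pm T\le\tan\alpha\,N$.

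Finally, I would settle this last inequality. Using the elementary operator bounds $(I-N)^{-1}\ge I+N$ (valid for $0<N<I$) and $(I+T^2)^{-1}\ge I-T^2$, the left-hand side is at least $(1+\tan^2\alpha)I+(\tan^2\alpha\,N-T^2)=\sec^2\alpha\,I+(\tan^2\alpha\,N-T^2)$, so the whole matter collapses to the key claim $T^2\le\tan^2\alpha\,N$. From $\pm T\le\tan\alpha\,N$, a congruence by $(\tan\alpha\,N)^{-1/2}$ gives $\|(\tan\alpha\,N)^{-1/2}T(\tan\alpha\,N)^{-1/2}\|\le1$, equivalently $TN^{-1}T\le\tan^2\alpha\,N$; combined with $N^{-1}\ge I$ (whence $T^2\le TN^{-1}T$) this yields $T^2\le\tan^2\alpha\,N$, completing the argument. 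I expect this final step to be the main obstacle: the decoupled estimates $\mathfrak R(G^{-1})\ge\cos^2\alpha\,H^{-1}$ or merely $\|T\|\le\tan\alpha$ are too lossy and fail precisely when $\mathfrak R A$ is small (that is, when $N$ is near $0$); only the joint constraint $\pm T\le\tan\alpha\,N$, linking the skew part to the real part through $T^2\le\tan^2\alpha\,N$, is strong enough to close the gap.
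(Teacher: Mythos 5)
Your proof is correct, and it is worth noting at the outset that the paper itself contains no proof of Lemma \ref{lem_f2}: it is quoted from \cite{1}. The proof in \cite{1} begins exactly as yours does, reducing the claim via Hansen's representation (Lemma \ref{hansen_lem}) to the pointwise estimate $\mathfrak R\left(1!_tA\right)\le \sec^2\alpha \left(1!_t\mathfrak RA\right)$, but it then finishes in a ``decoupled'' way, applying two quoted facts to $A$ and $A^{-1}$ rather than to your $G=(1-t)A+tI$: first $\mathfrak R(X^{-1})\le (\mathfrak RX)^{-1}$ for the accretive matrix $X=(1-t)I+tA^{-1}$, giving $\mathfrak R\left(1!_tA\right)\le \left[(1-t)I+t\,\mathfrak R(A^{-1})\right]^{-1}$, and then the Drury--Lin inequality \eqref{6} in the form $\mathfrak R(A^{-1})\ge \cos^2\alpha\,(\mathfrak RA)^{-1}$, which yields $(1-t)I+t\,\mathfrak R(A^{-1})\ge \cos^2\alpha\left[(1-t)I+t(\mathfrak RA)^{-1}\right]$ because the scalar term $(1-t)I\ge\cos^2\alpha\,(1-t)I$ supplies the needed slack; inverting finishes the proof. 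Your pointwise step is genuinely different and self-contained: the reformulation through $G=H^{1/2}(I+iT)H^{1/2}$, the reduction to $\tan^2\alpha\,(I-N)^{-1}+(I+T^2)^{-1}\ge\sec^2\alpha\,I$, and the key chain $T^2\le TN^{-1}T\le\tan^2\alpha\,N$ all check out, and they replace both quoted inequalities by elementary scalar bounds applied through the functional calculus plus one congruence trick. What each buys: the route of \cite{1} is shorter but rests on \eqref{6} as a black box; yours proves the lemma from scratch and isolates the precise strengthening $\pm T\le\tan\alpha\,N$ of the naive bound $\|T\|\le\tan\alpha$ that closes the argument. Your closing diagnosis is also accurate: the decoupled bound $\mathfrak R(G^{-1})\ge\cos^2\alpha\,H^{-1}$ applied to $G$ suffices only when $H\ge t(1+\cos^2\alpha)I$, which fails for small $\mathfrak RA$ --- though note that \cite{1} evades this loss not by any joint constraint but simply by applying the decoupled bounds before passing to anything like $G$.

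Two small repairs. First, $\pm T\le\tan\alpha\,N$ is not what sectoriality of $G$ gives (that yields only $\pm T\le\tan\alpha\,I$); it comes from sectoriality of $A$ via $\pm\mathfrak IG\le(1-t)\tan\alpha\,B=\tan\alpha(H-tI)$, which is the middle term of your own displayed chain, so the attribution should be rephrased. Second, the congruence by $(\tan\alpha\,N)^{-1/2}$ presupposes $\alpha>0$; for $\alpha=0$ note simply that $\pm T\le 0$ forces $T=0$, and the claim $T^2\le\tan^2\alpha\,N$ is trivial.
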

We recall that a linear mapping $\Phi:\mathcal{M}_n\to\mathcal{M}_n$ is said to be positive if $\Phi(A)\in\mathcal{M}_n^+$ whenever $A\in\mathcal{M}_n^+$. Further, if $\Phi(I)=I$, then $\Phi$ is said to be a unital positive linear mapping. The celebrated Choi-Davis inequality states that \cite{Ando,choi}
\begin{align*}
\Phi(f(A))\leq f(\Phi(A)),
\end{align*}
for $f\in\mathfrak{m}$ and $A\in\mathcal{M}_n^+$, where $\Phi$ is a unital positive linear mapping. When $A$ is accretive, we have the following version of this inequality \cite[Theorem 7.1]{1}.
\begin{lemma}\label{choi_davis_accretive}
Let $ f\in \mathfrak{m} $ , $ \Phi $ be a unital positive linear map and $ A\in\Pi_n^{\alpha}$. Then
$$\mathfrak R f(\Phi(A))\geq\cos^2(\alpha)\;\mathfrak R\Phi(f(A)).$$
\end{lemma}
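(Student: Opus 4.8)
The plan is to obtain the stated bound as a short sandwich that feeds the two accretive lemmas (Lemma \ref{lem_f1} and Lemma \ref{lem_f2}) into the classical Choi-Davis inequality, with everything routed through the real part $\mathfrak{R}A$. The preliminary ingredient I would establish is the commutation identity $\mathfrak{R}\Phi=\Phi\mathfrak{R}$: a positive linear map on $\mathcal{M}_n$ sends Hermitian matrices to Hermitian matrices and is therefore $*$-preserving, so $\Phi(B^*)=\Phi(B)^*$ and hence $\mathfrak{R}(\Phi(B))=\Phi(\mathfrak{R}(B))$ for every $B\in\mathcal{M}_n$. In particular $\mathfrak{R}(\Phi(A))=\Phi(\mathfrak{R}A)$, which is positive definite since $\mathfrak{R}A>0$ and $\Phi$ is unital and positive (if $\mathfrak{R}A\geq\epsilon I$ then $\Phi(\mathfrak{R}A)\geq\epsilon\Phi(I)=\epsilon I>0$); thus $\Phi(A)$ is itself accretive and $f(\Phi(A))$ is well defined because $\sigma(\Phi(A))$ avoids $(-\infty,0]$.

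With this in hand I would run the chain as follows. First, applying Lemma \ref{lem_f1} to the accretive matrix $\Phi(A)$ gives
$$\mathfrak{R} f(\Phi(A))\geq f(\mathfrak{R}(\Phi(A)))=f(\Phi(\mathfrak{R}A)).$$
Next, since $f\in\mathfrak{m}$ is operator concave and $\mathfrak{R}A$ is positive definite, the classical Choi-Davis inequality applied to $\mathfrak{R}A\in\mathcal{M}_n^+$ and the unital positive map $\Phi$ yields
$$f(\Phi(\mathfrak{R}A))\geq\Phi(f(\mathfrak{R}A)).$$
Finally, Lemma \ref{lem_f2} applied to $A\in\Pi_n^\alpha$ gives $\mathfrak{R}(f(A))\leq\sec^2(\alpha) f(\mathfrak{R}A)$, i.e.\ $f(\mathfrak{R}A)\geq\cos^2(\alpha)\mathfrak{R}(f(A))$; pushing this through the order-preserving map $\Phi$ and using the commutation identity once more,
$$\Phi(f(\mathfrak{R}A))\geq\cos^2(\alpha)\Phi(\mathfrak{R}(f(A)))=\cos^2(\alpha)\mathfrak{R}(\Phi(f(A))).$$
Concatenating the three displays produces exactly $\mathfrak{R} f(\Phi(A))\geq\cos^2(\alpha)\mathfrak{R}\Phi(f(A))$.

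Since every inequality in the chain is already supplied by a quoted result, there is no deep analytic obstacle; the proof is essentially an assembly, and the work is bookkeeping rather than difficulty. The points demanding care are verifying that $\Phi$ commutes with the real-part operation and is order-preserving (both consequences of positivity), and checking that each tool is invoked on a matrix meeting its hypotheses: Lemma \ref{lem_f1} on the accretive $\Phi(A)$, classical Choi-Davis on the positive definite $\mathfrak{R}A$, and Lemma \ref{lem_f2} on the sectorial $A$. The one place where one might stumble is the temptation to apply Choi-Davis directly to the accretive $A$; routing everything through the positive definite real part $\mathfrak{R}A$ is precisely what makes the classical inequality applicable, while the single sector-angle correction $\cos^2(\alpha)$ enters only through Lemma \ref{lem_f2} when $\sec^2(\alpha)$ is moved to the other side.
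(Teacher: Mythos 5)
The paper does not prove this lemma at all: it is quoted from \cite{1} (Theorem 7.1 there), so there is no in-paper argument to compare against. Your proof is correct and is the natural assembly one would expect: the commutation identity $\mathfrak{R}(\Phi(B))=\Phi(\mathfrak{R}B)$ is properly justified via $*$-preservation of positive linear maps, and each tool is invoked on a matrix meeting its hypotheses --- Lemma \ref{lem_f1} on the accretive matrix $\Phi(A)$, the classical Choi--Davis inequality on the positive definite $\mathfrak{R}A$, and Lemma \ref{lem_f2} on $A\in\Pi_n^{\alpha}$ --- after which the three displays concatenate exactly to $\mathfrak{R}f(\Phi(A))\geq\cos^2(\alpha)\,\mathfrak{R}\Phi(f(A))$. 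Since every ingredient you use is already stated in the paper, your argument in effect supplies a self-contained proof where the paper only gives a citation, which is a useful addition rather than a deviation.
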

The so-called operator mean is strongly related to the class $\mathfrak{m}$. Given $A,B\in\mathcal{P}$ and $f\in\mathfrak{m},$ we define $\sigma_f:\mathcal{P}\times \mathcal{P}\to\mathcal{P}$ by
\begin{equation}\label{eq_sigma}
A\sigma_f B=A^{\frac{1}{2}}f\left(A^{-\frac{1}{2}}BA^{-\frac{1}{2}}\right)A^{\frac{1}{2}}.
\end{equation}
This binary operation is usually called operator mean, associated with $f$. If no confusion arises, we use $\sigma$ instead of $\sigma_f$. The theory of operator means has received considerable attention in the literature, as seen in \cite{ando_1,3,nis,pus}. The theory of operator means has been extended to accretive matrices in \cite{1}, using the same identity as in \eqref{eq_sigma}. We refer the reader to \cite{1} for a detailed discussion of this topic. We also refer the reader to \cite{drury,rais,sano} for interesting related discussion.

Extending some results from \cite{drury,rais}, the following inequality was shown in \cite{1} for any $A,B\in\Pi_n$ and any operator mean $\sigma$ (or $\sigma_f$ for some $f\in\mathfrak{m}$):
\begin{equation}\label{eq_ineq_sig}
\mathfrak{R}A\sigma\mathfrak{R}B\leq\mathfrak{R}(A\sigma B)\leq\sec^2\alpha\;\mathfrak{R}A\sigma\mathfrak{R}B.
\end{equation}
The following lemma has also been shown in \cite{1}.
\begin{lemma}\label{thm_amgmhm_accretive}
Let $ A, B\in\Pi_{n}^{\alpha}$ for some $0\leq \alpha<\frac{\pi}{2}$. If $f\in \mathfrak{m}$ is such that $f'(1)=t$ for some $t\in (0,1),$ then
\begin{equation*}
\cos^{2}(\alpha)\;\mathfrak R(A!_{t}B) \ \leq\ \mathfrak R(A\sigma_f B)\ \leq\ \sec^{2}(\alpha)\;\mathfrak R(A\nabla_{t}B),
\end{equation*}
where $A!_tB=((1-t)A^{-1}+tB^{-1})^{-1}$ and $A\nabla_tB=(1-t)A+tB$ are the weighted harmonic and arithmetic means, respectively.
\end{lemma}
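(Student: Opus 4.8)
The plan is to reduce the whole statement to the positive definite case by means of the sandwich inequality \eqref{eq_ineq_sig}, and then to invoke the classical extremal property that, among all operator means of a fixed weight, the harmonic mean is the smallest and the arithmetic mean is the largest. Since $A,B\in\Pi_n^\alpha$, their real parts $\mathfrak{R}A$ and $\mathfrak{R}B$ are positive definite, so every operator mean $\sigma_g$ with $g\in\mathfrak{m}$ is well defined on the pair $(\mathfrak{R}A,\mathfrak{R}B)$, and \eqref{eq_ineq_sig} is available for each such mean.

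First I would record the scalar extremal bounds for the representing function. Writing $t=f'(1)\in(0,1)$, the concavity of $f$ (which holds because $f\in\mathfrak{m}$ is operator concave by Proposition \ref{oper_intro_prop}) gives the tangent-line estimate $f(x)\leq f(1)+f'(1)(x-1)=(1-t)+tx$ for all $x>0$. The complementary lower bound $((1-t)+tx^{-1})^{-1}\leq f(x)$ follows by applying the same tangent-line estimate to the dual function $f^{\perp}(x)=1/f(1/x)$, which again lies in $\mathfrak{m}$ and satisfies $(f^{\perp})'(1)=f'(1)=t$, and then substituting $x\mapsto 1/x$. Feeding the positive definite matrix $X=(\mathfrak{R}A)^{-1/2}(\mathfrak{R}B)(\mathfrak{R}A)^{-1/2}$ into the functional calculus and conjugating by $(\mathfrak{R}A)^{1/2}$ converts these scalar inequalities into the operator inequalities
\begin{equation*}
\mathfrak{R}A\,!_t\,\mathfrak{R}B\ \leq\ \mathfrak{R}A\,\sigma_f\,\mathfrak{R}B\ \leq\ \mathfrak{R}A\,\nabla_t\,\mathfrak{R}B .
\end{equation*}

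For the upper bound of the lemma I would then chain the right half of \eqref{eq_ineq_sig}, the rightmost operator inequality just obtained, and the linearity of the arithmetic mean:
$$\mathfrak{R}(A\sigma_f B)\ \leq\ \sec^2\alpha\,\bigl(\mathfrak{R}A\,\sigma_f\,\mathfrak{R}B\bigr)\ \leq\ \sec^2\alpha\,\bigl(\mathfrak{R}A\,\nabla_t\,\mathfrak{R}B\bigr)\ =\ \sec^2\alpha\,\mathfrak{R}(A\nabla_t B),$$
the last equality holding since $\mathfrak{R}A\,\nabla_t\,\mathfrak{R}B=(1-t)\mathfrak{R}A+t\mathfrak{R}B=\mathfrak{R}((1-t)A+tB)$. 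For the lower bound I would combine the left half of \eqref{eq_ineq_sig}, the leftmost operator inequality above, and \eqref{eq_ineq_sig} applied once more to the harmonic mean $!_t$ itself (an operator mean whose representing function $((1-t)+tx^{-1})^{-1}$ belongs to $\mathfrak{m}$): the right half of \eqref{eq_ineq_sig} for $!_t$ reads $\mathfrak{R}(A!_tB)\leq\sec^2\alpha\,(\mathfrak{R}A\,!_t\,\mathfrak{R}B)$, equivalently $\cos^2\alpha\,\mathfrak{R}(A!_tB)\leq\mathfrak{R}A\,!_t\,\mathfrak{R}B$. Threading these together gives
$$\cos^2\alpha\,\mathfrak{R}(A!_tB)\ \leq\ \mathfrak{R}A\,!_t\,\mathfrak{R}B\ \leq\ \mathfrak{R}A\,\sigma_f\,\mathfrak{R}B\ \leq\ \mathfrak{R}(A\sigma_f B),$$
which is the required left-hand estimate.

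The one step that demands genuine care, rather than bookkeeping, is the passage from the scalar bounds to the operator mean inequalities $\mathfrak{R}A\,!_t\,\mathfrak{R}B\leq\mathfrak{R}A\,\sigma_f\,\mathfrak{R}B\leq\mathfrak{R}A\,\nabla_t\,\mathfrak{R}B$, that is, the extremality of the harmonic and arithmetic means among weighted means of weight $t$. The delicate point is to establish the lower scalar bound through the dual function $f^{\perp}$ and to confirm $(f^{\perp})'(1)=f'(1)$, so that the harmonic mean indeed carries the same weight $t$; once this is in place, everything downstream is a routine combination of \eqref{eq_ineq_sig} with the positivity of the congruence and real-part maps.
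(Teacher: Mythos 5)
Your proof is correct, but there is nothing in the paper to compare it against: the paper does not prove Lemma \ref{thm_amgmhm_accretive} at all, it imports it from reference \cite{1} with the remark that it ``has also been shown'' there. So what you have written is a genuinely self-contained derivation, and it is sound. The tangent-line estimate $f(x)\le (1-t)+tx$ for the operator-concave $f$ with $f(1)=1$, $f'(1)=t$, together with the same estimate applied to the adjoint function $x\mapsto 1/f(1/x)$ --- which indeed lies in $\mathfrak{m}$ and satisfies $\frac{d}{dx}\,f(1/x)^{-1}=\frac{f'(1/x)}{x^{2}f(1/x)^{2}}$, hence has derivative $t$ at $x=1$ --- gives the scalar extremality $((1-t)+tx^{-1})^{-1}\le f(x)\le (1-t)+tx$; functional calculus on $X=(\mathfrak{R}A)^{-1/2}\,\mathfrak{R}B\,(\mathfrak{R}A)^{-1/2}$ and congruence by $(\mathfrak{R}A)^{1/2}$ then yield $\mathfrak{R}A\,!_t\,\mathfrak{R}B\le \mathfrak{R}A\,\sigma_f\,\mathfrak{R}B\le \mathfrak{R}A\,\nabla_t\,\mathfrak{R}B$, and the two halves of \eqref{eq_ineq_sig} --- applied to $\sigma_f$ and, for the left-hand bound, also to the operator mean $!_t$ itself --- close the chain exactly as you wrote. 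Two small remarks. First, your symbol $f^{\perp}$ collides with the paper's notation, where $\sigma^{\perp}$ denotes the mean with representing function $x/f(x)$; what you actually use is the adjoint $1/f(1/x)$, a different function in general, so a different symbol would avoid confusion. Second, the scalar extremality can be obtained entirely from the paper's own toolkit via Lemma \ref{hansen_lem}: writing $f(x)=\int_0^1(1!_sx)\,d\nu_f(s)$ and noting $\int_0^1 s\,d\nu_f(s)=f'(1)=t$, the upper bound follows from the pointwise inequality $1!_sx\le (1-s)+sx$, and the lower bound from Jensen's inequality, since $s\mapsto 1!_sx$ is convex; this route reaches the same two mean inequalities without introducing the adjoint function.
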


The next section presents several new relations and inequalities for elements in $\Pi_n$ and $\Pi_n^{\alpha}.$ To make it easier for the reader to follow, we will emphasize the significance of each result by presenting the existing related result in the literature. Our discussion will include order-preserving inequalities, Choi-Davis-type inequalities, mean inequalities, entropy results, and other characterizations.

Among the most interesting findings in this paper, we show that if $A>0$, then
\[\mathfrak R\left( Y{{A}^{-1}}Y \right)\le \mathfrak RY\;{{A}^{-1}}\;\mathfrak RY,\]
for any $Y\in {{\mathcal M}_{n}}$. We also show that if $T\in\Pi_{n}^{\alpha},$ then
\[\left| T \right|\le \sec \alpha \left| {{\left( \mathfrak RT \right)}^{\frac{1}{2}}}U{{\left( \mathfrak RT \right)}^{\frac{1}{2}}} \right|\]
for some unitary $U$, where $|X|=(X^*X)^{1/2},$ when $X\in\mathcal{M}_n$.

Discussion of entropy-like results for accretive matrices will be presented, as a new track in this field.

Many other results about accretive matrices will be shown, and a glimpse of the relation with the literature will be presented to make it easier for the reader to comprehend the whole picture.
\section{Main Results}
In this section, we present our results. To make it easier and more accessible for the reader, we present these results in consequent subsections.
\subsection{Order preserving results}
We begin with a L\"{o}wner-Heinz theorem for accretive matrices. More precisely, if $f\in\mathfrak{m}$ and $0<A\leq B$, then $f(A)\leq f(B)$. This is indeed the definition of operator monotony. The following result extends this to the class of sectorial matrices by appealing to the real parts.
\begin{theorem}\label{1}
Let $f\in\mathfrak{m}$ and $A,B\in \Pi_n^{\alpha}$. Then
	\[\mathfrak R A\le \mathfrak R B \quad\Rightarrow \quad \mathfrak Rf\left( A \right)\le {{\sec }^{2}}\alpha \;\mathfrak Rf\left( B \right).\]
In particular,
\begin{equation}\label{h1}
\mathfrak RA\le \mathfrak RB\quad\Rightarrow \quad\mathfrak R{{A}^{r}}\le {{\sec }^{2}}\alpha \;\mathfrak R{{B}^{r}};\text{ }0\le r\le 1.
\end{equation}
\end{theorem}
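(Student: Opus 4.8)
The plan is to chain together the two one-sided estimates furnished by Lemmas~\ref{lem_f1} and \ref{lem_f2} with the operator monotonicity of $f$ (the L\"owner--Heinz property encoded in the definition of $\mathfrak m$). The entire argument then reduces to a short sequence of inequalities in $\mathcal H_n$, with the scalar $\sec^2\alpha$ absorbing the sectorial defect.

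First I would note that since $A,B\in\Pi_n^{\alpha}$, both real parts are positive definite, so the hypothesis reads $0<\mathfrak R A\le\mathfrak R B$. As $f\in\mathfrak m$ is operator monotone on $(0,\infty)$, applying $f$ preserves this order, giving $f(\mathfrak R A)\le f(\mathfrak R B)$. I would then place the two lemmas at the ends of the chain: Lemma~\ref{lem_f2} bounds $\mathfrak R f(A)$ from above by $\sec^2\alpha\, f(\mathfrak R A)$, while Lemma~\ref{lem_f1}, applied to $B$, bounds $f(\mathfrak R B)$ from above by $\mathfrak R f(B)$.

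Since $\sec^2\alpha>0$, multiplying the monotonicity inequality by this scalar keeps the order intact, and combining the three estimates yields
\[
\mathfrak R f(A)\le \sec^2\alpha\, f(\mathfrak R A)\le \sec^2\alpha\, f(\mathfrak R B)\le \sec^2\alpha\, \mathfrak R f(B),
\]
which is precisely the asserted implication. The particular case \eqref{h1} follows at once by choosing $f(x)=x^{r}$, which belongs to $\mathfrak m$ for each $0\le r\le 1$.

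I do not anticipate a serious obstacle: the real content already sits inside Lemmas~\ref{lem_f1} and \ref{lem_f2}, and the remaining work is bookkeeping. The only point needing care is to apply each lemma to the correct matrix --- the upper bound of Lemma~\ref{lem_f2} to $A$ and the lower bound of Lemma~\ref{lem_f1} to $B$ --- so that the two one-sided estimates point in the directions required for the chain to close. It is also worth verifying that the positive scalar $\sec^2\alpha\ge 1$ respects the L\"owner order under scaling, which is immediate.
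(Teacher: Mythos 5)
Your proposal is correct and is essentially identical to the paper's own proof: both chain $\mathfrak R f(A)\le \sec^2\alpha\, f(\mathfrak R A)\le \sec^2\alpha\, f(\mathfrak R B)\le \sec^2\alpha\, \mathfrak R f(B)$, using Lemma~\ref{lem_f2} on $A$, operator monotonicity of $f$, and Lemma~\ref{lem_f1} on $B$, in that order. The special case \eqref{h1} is obtained in the same way, by taking $f(x)=x^r$.
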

\begin{proof}
We have
\[\begin{aligned}
\mathfrak Rf\left( A \right)&\le {{\sec }^{2}}\alpha \;f\left( \mathfrak RA \right) \quad \text{(by Lemma \ref{lem_f2})}\\
& \le {{\sec }^{2}}\alpha \;f\left( \mathfrak RB \right) \quad \text{(since $f$ is operator monotone and $\mathfrak{R}A\leq \mathfrak{R}B$) }\\
& \le {{\sec }^{2}}\alpha \;\mathfrak Rf\left( B \right)\quad \text{(by Lemma \ref{lem_f1})}.
\end{aligned}\]
This completes the proof.
\end{proof}
We know that if $A\in \Pi_n^{\alpha}$, then \cite{drury_lin}
\begin{equation}\label{6}
{{\left( \mathfrak RA \right)}^{-1}}\le {{\sec }^{2}}\alpha\; \mathfrak R{{A}^{-1}}.
\end{equation}
The following result is an application of the inequality \eqref{6},
\[\mathfrak RA\sharp\mathfrak R{{A}^{-1}}\ge \frac{1}{\sec \alpha }\left( \mathfrak RA\sharp{{\left( \mathfrak RA \right)}^{-1}} \right)=\frac{1}{\sec \alpha }.\]
Therefore,
\[\mathfrak RA\sharp \mathfrak R{{A}^{-1}}\ge \frac{1}{\sec \alpha }.\]
Here the notation $\sharp$ refers to the geometric mean, which is defined for any $A,B\in\Pi_n$ as follows
$$A\sharp B=A^{\frac{1}{2}}\left(A^{-\frac{1}{2}}BA^{-\frac{1}{2}}\right)^{\frac{1}{2}}A^{\frac{1}{2}}.$$
\begin{remark}
Notice that when $A,B\in\Pi_n^{\alpha}$, we have for $0\le r \le 1$
\[\mathfrak RA\le \mathfrak R B \text{ }\Rightarrow \text{ }\mathfrak R{{B}^{-r}}\le {{\sec }^{4}}\alpha\; \mathfrak R{{A}^{-r}},\]
since
\[\begin{aligned}
\mathfrak R{{B}^{-r}}&\le {{\left( \mathfrak R{{B}^{r}} \right)}^{-1}} \\
& \le {{\sec }^{2}}\alpha \;{{\left( \mathfrak R{{A}^{r}} \right)}^{-1}}\quad \text{(by \eqref{h1})} \\
& \le {{\sec }^{4}}\alpha \;\mathfrak R{{A}^{-r}} \quad \text{(by \eqref{6})}.
\end{aligned}\]
\end{remark}
\subsection{Choi-Davis type inequalities}
It is known that if $f\in\mathfrak{m}, A_i\in\mathcal{M}_n^+$ and $C_i\in\mathcal{M}_n$ are such that $\sum_{i=1}^{k}C_i^*C_i=I,$ then \cite[Theorem 1.9]{book1}
\begin{align}\label{eq_cc*}
f\left(\sum_{i=1}^{k}C_i^*A_iC_i\right)\geq \sum_{i=1}^{k}C_i^*f(A_i)C_i.
\end{align}
At this point, we show the accretive version of this inequality. We notice that when $A\in\Pi_n^{\alpha}$, then $C^*AC\in\Pi_n^{\alpha}$ for any $C\in\mathcal{M}_n$.
In order to show the accretive version of \eqref{eq_cc*}, we first present the following simple lemma.
\begin{lemma}
Let $A,B\in\Pi_n^{\alpha}.$ Then $A+B\in\Pi_n^{\alpha}.$
\end{lemma}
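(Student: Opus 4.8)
The plan is to argue directly from the definition of $\Pi_n^\alpha$ in terms of numerical ranges, exploiting the fact that the sector
$$S_\alpha=\{z\in\mathbb{C}:\mathfrak{R}(z)>0,\ |\mathfrak{I}(z)|\leq(\tan\alpha)\,\mathfrak{R}(z)\}$$
is a convex cone, hence closed under addition. First I would fix an arbitrary unit vector $x\in\mathbb{C}^n$ and use the linearity of the inner product to write
$$\langle(A+B)x,x\rangle=\langle Ax,x\rangle+\langle Bx,x\rangle.$$
By hypothesis $\langle Ax,x\rangle$ and $\langle Bx,x\rangle$ both lie in $S_\alpha$, so I would record them as $\langle Ax,x\rangle=a_1+ib_1$ and $\langle Bx,x\rangle=a_2+ib_2$ with $a_1,a_2>0$ and $|b_j|\leq(\tan\alpha)\,a_j$ for $j=1,2$.

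The verification then splits into the two sector conditions. For the real part, $\mathfrak{R}\langle(A+B)x,x\rangle=a_1+a_2>0$ since each summand is positive. For the imaginary part, I would apply the triangle inequality and then the two individual sector bounds:
$$|b_1+b_2|\leq|b_1|+|b_2|\leq(\tan\alpha)\,a_1+(\tan\alpha)\,a_2=(\tan\alpha)\,(a_1+a_2),$$
which is exactly the requirement that $\mathfrak{I}\langle(A+B)x,x\rangle$ be bounded by $\tan\alpha$ times $\mathfrak{R}\langle(A+B)x,x\rangle$. Hence $\langle(A+B)x,x\rangle\in S_\alpha$.

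Since $x$ was an arbitrary unit vector, the entire numerical range of $A+B$ is contained in $S_\alpha$, which is precisely the statement $A+B\in\Pi_n^\alpha$. There is no serious obstacle here: the only substantive point is the recognition that the defining sector is additively closed, and this follows at once from the additivity of real parts together with the triangle inequality on imaginary parts. The result is the natural reflection of the geometric fact that the union of two vectors lying inside a fixed angular cone about the positive real axis cannot exit that cone under addition.
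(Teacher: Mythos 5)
Your proof is correct and follows essentially the same route as the paper: both arguments fix a vector $x$, use linearity of the inner product, add the two sector bounds $|\mathfrak{I}\langle Ax,x\rangle|\leq(\tan\alpha)\,\mathfrak{R}\langle Ax,x\rangle$ and $|\mathfrak{I}\langle Bx,x\rangle|\leq(\tan\alpha)\,\mathfrak{R}\langle Bx,x\rangle$, and conclude via the triangle inequality on the imaginary parts. Your write-up is just a slightly more explicit rendering (introducing $a_j+ib_j$ and noting positivity of $a_1+a_2$) of exactly the paper's argument.
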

\begin{proof}
By definition of $\Pi_n^{\alpha},$ we have
\begin{align*}
|\mathfrak{I}\left<Ax,x\right>|\leq\tan\alpha\;\mathfrak{R}\left<Ax,x\right>\;{\text{and}}\;|\mathfrak{I}\left<Bx,x\right>|\leq\tan\alpha\;\mathfrak{R}\left<Bx,x\right>, x\in\mathbb{C}^n.
\end{align*}
Adding these two inequalities, we get
\begin{align*}
\tan\alpha\;\mathfrak{R}\left<(A+B)x,x\right>&\geq |\mathfrak{I}\left<Ax,x\right>|+|\mathfrak{I}\left<Bx,x\right>|\\
&\geq |\mathfrak{I}\left<Ax,x\right>+\mathfrak{I}\left<Bx,x\right>|\\
&=|\mathfrak{I}\left<(A+B)x,x\right>|.
\end{align*}
This completes the proof.
\end{proof}
Consequently, if $A_i\in\Pi_n^{\alpha}$ and $C_i\in\mathcal{M}_n$, ($i=1,\cdots,k$), then $\sum_{i=1}^{k}C_i^*A_iC_i\in\Pi_n^{\alpha}.$
We are ready to show the sectorial version of \eqref{eq_cc*}.
\begin{proposition}
Let $A_i\in\Pi_n^{\alpha}$ and $C_i\in\mathcal{M}_n$, ($i=1,\cdots,k$) be such that $\sum_{i=1}^{k}C_i^*C_i=I.$ Then
\begin{equation}\label{3}
\mathfrak R\left( \sum\limits_{i=1}^{n}{C_{i}^{*}f\left( {{A}_{i}} \right){{C}_{i}}} \right)\le {{\sec }^{2}}\alpha\; \mathfrak Rf\left( \sum\limits_{i=1}^{n}{C_{i}^{*}{{A}_{i}}{{C}_{i}}} \right),
\end{equation}
where $\sum\nolimits_{i=1}^{n}{C_{i}^{*}{{C}_{i}}}=I$.
\end{proposition}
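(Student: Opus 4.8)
The plan is to chain together the two real-part estimates for $f$ (Lemmas \ref{lem_f1} and \ref{lem_f2}) with the classical positive-definite inequality \eqref{eq_cc*}, exploiting the fact that the real-part map interacts nicely with congruences. The crucial elementary observation is that for any $C,X\in\mathcal{M}_n$ one has $\mathfrak{R}(C^*XC)=C^*(\mathfrak{R}X)C$, and that $\mathfrak{R}$ is additive; together these yield
$$\mathfrak{R}\left(\sum_i C_i^* X_i C_i\right)=\sum_i C_i^*(\mathfrak{R}X_i)C_i.$$
I would also record the routine fact that the congruence $X\mapsto C^*XC$ is order-preserving on Hermitian matrices, so that scalar-multiple inequalities survive conjugation and summation.

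First I would rewrite the left-hand side as $\sum_i C_i^*\,\mathfrak{R}(f(A_i))\,C_i$ via the identity above, and then apply Lemma \ref{lem_f2} to each summand, giving $\mathfrak{R}(f(A_i))\le \sec^2\alpha\, f(\mathfrak{R}A_i)$. Conjugating by $C_i$ and summing produces
$$\mathfrak{R}\left(\sum_i C_i^* f(A_i)C_i\right)\le \sec^2\alpha\,\sum_i C_i^* f(\mathfrak{R}A_i)\,C_i.$$
Since each $A_i\in\Pi_n^\alpha\subset\Pi_n$ forces $\mathfrak{R}A_i>0$, the matrices $\mathfrak{R}A_i$ are positive definite, so I can invoke the classical inequality \eqref{eq_cc*} to bound $\sum_i C_i^* f(\mathfrak{R}A_i)C_i\le f\!\left(\sum_i C_i^*(\mathfrak{R}A_i)C_i\right)$.

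To finish, I would identify $\sum_i C_i^*(\mathfrak{R}A_i)C_i$ with $\mathfrak{R}\!\left(\sum_i C_i^* A_i C_i\right)$ using the real-part identity, and then apply Lemma \ref{lem_f1} to the matrix $B=\sum_i C_i^* A_i C_i$. This last step requires $B\in\Pi_n$, which is exactly the content of the discussion preceding the statement (the conjugated sum of sectorial matrices stays in $\Pi_n^\alpha$). Lemma \ref{lem_f1} then gives $f(\mathfrak{R}B)\le \mathfrak{R}(f(B))$, and combining all the inequalities produces the claimed bound with a single overall factor of $\sec^2\alpha$, coming entirely from Lemma \ref{lem_f2}.

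I do not expect a genuine obstacle here, since the argument is a clean telescoping of already-established inequalities. The only point demanding care is the bookkeeping of which ambient class (namely $\mathcal{P}_n$, $\Pi_n$, or $\Pi_n^\alpha$) each intermediate matrix inhabits, so that every cited result is applied strictly under its correct hypotheses: the positivity of the $\mathfrak{R}A_i$ for \eqref{eq_cc*}, and the membership $B\in\Pi_n$ for Lemma \ref{lem_f1}.
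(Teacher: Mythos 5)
Your proof is correct, but it follows a genuinely different route from the paper's. The paper proves the single-matrix case $\mathfrak R\left( {{C}^{*}}f\left( A \right)C \right)\le {{\sec }^{2}}\alpha\, \mathfrak Rf\left( {{C}^{*}}AC \right)$ by specializing the quoted accretive Choi--Davis inequality (Lemma \ref{choi_davis_accretive}) to the unital positive map $X\mapsto C^*XC$, and then obtains the multi-matrix statement by a dilation trick: it assembles the block-diagonal matrix $X=\mathrm{diag}(A_1,\dots,A_k)$ and the block column $\widetilde{C}$ of the $C_i$, notes $\widetilde{C}^*\widetilde{C}=I$, and applies the single-matrix inequality to the pair $(X,\widetilde{C})$. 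You instead bypass Lemma \ref{choi_davis_accretive} entirely and work at the level of real parts: the identity $\mathfrak{R}\bigl(\sum_i C_i^*X_iC_i\bigr)=\sum_i C_i^*(\mathfrak{R}X_i)C_i$ lets you apply Lemma \ref{lem_f2} termwise (producing the factor $\sec^2\alpha$), then the classical positive-definite inequality \eqref{eq_cc*} to the matrices $\mathfrak{R}A_i>0$, and finally Lemma \ref{lem_f1} to $B=\sum_i C_i^*A_iC_i$, whose membership in $\Pi_n$ you correctly justify via the discussion preceding the statement. In effect you have unpacked, in the multi-variable setting, the very argument by which the accretive Choi--Davis lemma is proved in the cited reference. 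What each approach buys: the paper's proof is shorter given its imported lemma and showcases the standard dilation technique; yours is more self-contained within this paper (only Lemmas \ref{lem_f1}, \ref{lem_f2} and \eqref{eq_cc*} are needed), and it makes transparent that the entire loss $\sec^2\alpha$ comes from the single application of Lemma \ref{lem_f2}, while the step through Lemma \ref{lem_f1} costs nothing. Your bookkeeping of hypotheses (positive definiteness of $\mathfrak{R}A_i$ for \eqref{eq_cc*}, accretivity of $B$ for Lemma \ref{lem_f1}) is exactly the care the argument requires, so there is no gap.
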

\begin{proof}
First, we notice that if $C\in\mathcal{M}_n$ is such that $C^*C=I,$ then the mapping $\Phi:\mathcal{M}_n\to\mathcal{M}_n$ defined by $\Phi(X)=C^*XC$ is unital positive linear mapping. Therefore, Lemma \ref{choi_davis_accretive} implies
\begin{equation}\label{2}
\mathfrak R\left( {{C}^{*}}f\left( A \right)C \right)\le {{\sec }^{2}}\alpha\; \mathfrak Rf\left( {{C}^{*}}AC \right)
\end{equation}
where $f\in\mathfrak{m}, {{C}^{*}}C=I$ and $A\in\Pi_n^{\alpha}.$
Let
	\[X=\left[ \begin{matrix}
{{A}_{1}} & {} & {} & O \\
{} & {{A}_{2}} & {} & {} \\
{} & {} & \ddots & {} \\
O & {} & {} & {{A}_{k}} \\
\end{matrix} \right]\text{ and }\widetilde{C}=\left[ \begin{matrix}
{{C}_{1}} \\
{{C}_{2}} \\
\vdots \\
{{C}_{k}} \\
\end{matrix} \right].\]
It follows, by the same argument preceding the theorem, that ${{\widetilde{C}}^{*}}X\widetilde{C}\in\Pi_n^{\alpha}.$ Now, noting that ${{\widetilde{C}}^{*}}\widetilde{C}=I$, we have
	\[\begin{aligned}
\mathfrak R\left( \sum\limits_{i=1}^{n}{C_{i}^{*}f\left( {{A}_{i}} \right){{C}_{i}}} \right)&=\mathfrak R\left( {{\widetilde{C}}^{*}}f\left( X \right)\widetilde{C} \right) \\
& \le {{\sec }^{2}}\alpha \;\mathfrak Rf\left( {{\widetilde{C}}^{*}}X\widetilde{C} \right) \quad \text{(by \eqref{2})}\\
& ={{\sec }^{2}}\alpha \;\mathfrak Rf\left( \sum\limits_{i=1}^{n}{C_{i}^{*}{{A}_{i}}{{C}_{i}}} \right).
\end{aligned}\]
This completes the proof.
\end{proof}
Extending \eqref{2}, we can state the following result.
\begin{theorem}
Let $f\in\mathfrak{m}$ and let $C\in\mathcal{M}_n$ be such that ${{C}^{*}}C\le I$. Then
\[\mathfrak R\left( {{C}^{*}}f\left( A \right)C \right)\le {{\sec }^{2}}\alpha \; \mathfrak Rf\left( {{C}^{*}}AC \right),\]
for any $A\in\Pi_n^{\alpha}.$
\end{theorem}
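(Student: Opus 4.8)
The plan is to avoid dilating $C$ into an isometry and instead prove the inequality by a direct chain built from Lemmas \ref{lem_f1}, \ref{lem_f2} and the multiple-map Jensen inequality \eqref{eq_cc*}. The key structural fact I would use repeatedly is that taking the real part commutes with congruence: for every $Y\in\mathcal{M}_n$ we have $\mathfrak{R}(C^*YC)=C^*\mathfrak{R}(Y)C$, so in particular $\mathfrak{R}(C^*f(A)C)=C^*\mathfrak{R}(f(A))C$ and $\mathfrak{R}(C^*AC)=C^*(\mathfrak{R}A)C$. Since $A\in\Pi_n^\alpha$ gives $\mathfrak{R}A>0$, and congruence by $C$ preserves the matrix order, every intermediate term stays comparable.

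With this in place I would argue as follows. By Lemma \ref{lem_f2} and congruence, $\mathfrak{R}(C^*f(A)C)=C^*\mathfrak{R}(f(A))C\le\sec^2\alpha\,C^*f(\mathfrak{R}A)C$. Next I bound $C^*f(\mathfrak{R}A)C\le f(C^*(\mathfrak{R}A)C)$ (the step discussed below), and rewrite $C^*(\mathfrak{R}A)C=\mathfrak{R}(C^*AC)$. Finally, Lemma \ref{lem_f1} applied to the accretive matrix $C^*AC$ gives $f(\mathfrak{R}(C^*AC))\le\mathfrak{R}f(C^*AC)$. Multiplying by $\sec^2\alpha\ge 0$ and concatenating yields $\mathfrak{R}(C^*f(A)C)\le\sec^2\alpha\,f(\mathfrak{R}(C^*AC))\le\sec^2\alpha\,\mathfrak{R}f(C^*AC)$, as required.

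The main obstacle is the middle step $C^*f(\mathfrak{R}A)C\le f(C^*(\mathfrak{R}A)C)$, which is exactly where the hypothesis $C^*C\le I$ rather than $C^*C=I$ is genuinely used. I would complete $C$ by putting $D=(I-C^*C)^{1/2}$, so that $C^*C+D^*D=I$, and then apply \eqref{eq_cc*} with $k=2$, coefficients $C_1=C,\ C_2=D$, and positive semidefinite arguments $A_1=\mathfrak{R}A,\ A_2=0$. This produces $f(C^*(\mathfrak{R}A)C)\ge C^*f(\mathfrak{R}A)C+f(0)\,D^2$; because $f\in\mathfrak{m}$ extends continuously to $[0,\infty)$ with $f(0)\ge 0$, the summand $f(0)D^2$ is positive semidefinite and may be dropped, giving the bound. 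One last technical remark: Lemma \ref{lem_f1} and the very definition of $f(C^*AC)$ presuppose that $C^*AC$ is accretive, i.e. $C^*(\mathfrak{R}A)C>0$; this is automatic when $C$ is injective, and a general contraction is handled by approximating $C$ with nearby injective contractions and using continuity of both sides in $C$.
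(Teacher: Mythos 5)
Your proof is correct, but it takes a genuinely different route from the paper's. The paper also introduces the defect operator $D=\sqrt{I-C^*C}$, but then finishes in one stroke: it feeds the pair $(A,O)$ into the sectorial Jensen inequality \eqref{3}, writing $\mathfrak R(C^*f(A)C)\le \mathfrak R(C^*f(A)C+D^*f(O)D)\le \sec^2\alpha\,\mathfrak Rf(C^*AC+D^*OD)$. You instead never invoke \eqref{3} (nor Lemma \ref{choi_davis_accretive}, on which it rests): you pass to real parts immediately via $\mathfrak R(C^*YC)=C^*\mathfrak R(Y)C$, apply Lemma \ref{lem_f2} to enter, use the classical positive-matrix inequality \eqref{eq_cc*} (with the same completion $C_1=C$, $C_2=D$, but applied only to the \emph{positive} matrices $\mathfrak RA$ and $0$) for the contraction step $C^*f(\mathfrak RA)C\le f(C^*(\mathfrak RA)C)$, and apply Lemma \ref{lem_f1} to exit. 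What your route buys is rigor at exactly the point where the paper is loose: the paper's application of \eqref{3} is strictly outside its hypotheses, since the zero matrix $O$ is not in $\Pi_n^\alpha$, whereas your use of \eqref{eq_cc*} with a singular positive semidefinite argument is legitimate. You also make explicit the hypothesis hidden in both proofs, namely that Lemma \ref{lem_f1} needs $C^*AC$ to be accretive, which fails for singular $C$; the paper ignores this entirely. What the paper's route buys is brevity: one application of \eqref{3} replaces your three-step sandwich.

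One caveat on your final remark: the approximation of a singular contraction $C$ by injective ones (say $C_\epsilon=U((1-\epsilon)|C|+\epsilon I)$ with $C=U|C|$) handles the left side easily, but "continuity of both sides in $C$" presupposes that $f(C^*AC)$ is defined and is the limit of $f(C_\epsilon^*AC_\epsilon)$ when $C$ is singular. This needs an argument (one can check that every nonzero eigenvalue of $C^*AC$ has positive real part and that the zero eigenvalue is semisimple, so the limit exists), and without it your last paragraph is a sketch rather than a proof. Since the paper's own statement and proof suffer from the identical ambiguity, this is a shared boundary issue rather than a defect specific to your argument.
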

\begin{proof}
Put $D=\sqrt{I-{{C}^{*}}C}$, where $C$ is a contraction (i.e., $C^*C\leq I$). Since ${{C}^{*}}C+{{D}^{*}}D=I$, we can write from \eqref{3} that
	\[\begin{aligned}
\mathfrak R\left( {{C}^{*}}f\left( A \right)C \right)&\le \mathfrak R\left( {{C}^{*}}f\left( A \right)C+{{D}^{*}}f\left( O \right)D \right) \\
& \le {{\sec }^{2}}\alpha\; \mathfrak Rf\left( {{C}^{*}}AC+{{D}^{*}}OD \right) \\
& = {{\sec }^{2}}\alpha \; \mathfrak Rf\left( {{C}^{*}}AC \right).
\end{aligned}\]
\end{proof}
\subsection{Means inequalities}
We have seen in \eqref{eq_ineq_sig} that
\[\mathfrak RA\sigma \mathfrak RB\le \mathfrak R\left( A\sigma B \right)\leq \sec^{2}\alpha\; \mathfrak RA\sigma \mathfrak RB.\]
In one way or another, this inequality is related to the so-called Callebaut inequality, whose matrix version states that if $A_i,B_i\in\mathcal{P}_n$, and if $\sigma$ is an operator mean, then \cite{mos}
\begin{align*}
\sum_{i=1}^{k}(A_i\sharp B_i)&\leq \left(\sum_{i=1}^{k}A_i\sigma B_i\right)\sharp \left(\sum_{i=1}^{k}A_i\sigma^{\perp} B_i\right)\leq \left(\sum_{i=1}^{k}A_i\right)\sharp\left(\sum_{i=1}^{k}B_i\right),
\end{align*}
where $\sigma^{\perp}$ is the operator mean associated with the function $\frac{t}{f(t)}.$ Here $f\in\mathfrak{m}$ is the function characterizing $\sigma$ as in \eqref{eq_sigma}.

Now we present the sectorial version of Callebaut inequality.
\begin{theorem}
Let $A_i,B_i\in\Pi_n^{\alpha}$ and let $\sigma=\sigma_f$ for some $f\in\mathfrak{m}.$ Then
\[\sum\limits_{i=1}^{n}{\mathfrak R{{A}_{i}}\sharp \mathfrak R{{B}_{i}}}\le \left( \sum\limits_{i=1}^{n}{\mathfrak R{{A}_{i}}\sigma \mathfrak R{{B}_{i}}} \right)\sharp\left( \sum\limits_{i=1}^{n}{\mathfrak R{{A}_{i}}{{\sigma }^{\bot }}\mathfrak R{{B}_{i}}} \right)\le {{\sec }^{2}}\alpha \; \mathfrak R\left( \sum\limits_{i=1}^{n}{{{A}_{i}}} \right)\sharp \mathfrak R\left( \sum\limits_{i=1}^{n}{{{B}_{i}}} \right).\]
\end{theorem}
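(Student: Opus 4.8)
The plan is to reduce the whole statement to the ordinary matrix Callebaut inequality \cite{mos}, applied not to $A_i,B_i$ themselves but to their real parts. The key preliminary observation is that, because $A_i,B_i\in\Pi_n^{\alpha}$, each $\mathfrak{R}A_i$ and $\mathfrak{R}B_i$ lies in $\mathcal{P}_n$; hence the means $\sigma$, $\sigma^{\perp}$ and the geometric mean $\sharp$ are all genuinely defined on these positive definite Hermitian matrices, and the finite sums $\sum_i\mathfrak{R}A_i\,\sigma\,\mathfrak{R}B_i$ and $\sum_i\mathfrak{R}A_i\,\sigma^{\perp}\,\mathfrak{R}B_i$ are again positive definite, so the outer $\sharp$ in the middle term is well-posed. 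I would point out at the outset that, although the sectorial mean inequality \eqref{eq_ineq_sig} is the natural tool for such problems, it is not needed here: every quantity in the statement is built from real parts rather than from accretive means $A_i\sigma B_i$.

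Next I would invoke the matrix Callebaut inequality verbatim, with $A_i,B_i$ replaced throughout by $\mathfrak{R}A_i,\mathfrak{R}B_i$. This immediately yields
\[\sum_{i=1}^{n}\mathfrak{R}A_i\sharp\mathfrak{R}B_i\le\left(\sum_{i=1}^{n}\mathfrak{R}A_i\,\sigma\,\mathfrak{R}B_i\right)\sharp\left(\sum_{i=1}^{n}\mathfrak{R}A_i\,\sigma^{\perp}\,\mathfrak{R}B_i\right)\le\left(\sum_{i=1}^{n}\mathfrak{R}A_i\right)\sharp\left(\sum_{i=1}^{n}\mathfrak{R}B_i\right),\]
which is exactly the first inequality of the theorem together with a clean upper bound.

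It remains only to rewrite the right end. By linearity of the real part, $\sum_i\mathfrak{R}A_i=\mathfrak{R}\!\left(\sum_i A_i\right)$ and $\sum_i\mathfrak{R}B_i=\mathfrak{R}\!\left(\sum_i B_i\right)$, so the top of the chain equals $\mathfrak{R}\!\left(\sum_i A_i\right)\sharp\mathfrak{R}\!\left(\sum_i B_i\right)$; since $\sec^2\alpha\ge 1$ and a geometric mean of positive definite matrices is positive definite, inserting $\sec^2\alpha$ gives the stated bound. I do not expect a genuine obstacle in this argument: the only substantive point is recognizing the reduction to real parts, after which the sectorial hypothesis contributes merely positivity of $\mathfrak{R}A_i,\mathfrak{R}B_i$. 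In particular the factor $\sec^2\alpha$ is not actually forced and could be omitted; I would keep it for consistency with the other sectorial estimates of this section.
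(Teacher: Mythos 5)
Your proof is correct, and it takes a genuinely different---and more economical---route than the paper's. You invoke the classical Callebaut inequality of \cite{mos} verbatim for the positive definite matrices $\mathfrak{R}A_i,\mathfrak{R}B_i$, then finish with linearity of the real part and $\sec^2\alpha\ge 1$; this is legitimate because, as you observe, every term in the statement is built from real parts only, so the sectorial hypothesis enters solely through positivity of $\mathfrak{R}A_i$ and $\mathfrak{R}B_i$. The paper instead re-derives the inequality from scratch: it proves the left inequality via the identity $X\sharp Y=\left(X\sigma Y\right)\sharp\left(X\sigma^{\perp}Y\right)$ combined with superadditivity of operator means, and it obtains the right inequality by a detour through the accretive sums---first bounding $\sum_i\mathfrak{R}A_i\,\sigma\,\mathfrak{R}B_i\le\mathfrak{R}\left(\left(\sum_i A_i\right)\sigma\left(\sum_i B_i\right)\right)$ using one direction of \eqref{eq_ineq_sig}, then coming back down with the reverse direction of \eqref{eq_ineq_sig}, which is exactly where the factor $\sec^2\alpha$ is paid. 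What the paper's route buys is a chain of intermediate estimates involving the genuinely accretive quantities $\mathfrak{R}\left(\left(\sum_i A_i\right)\sigma\left(\sum_i B_i\right)\right)$, which have some independent interest; what your route buys is brevity and a strictly stronger conclusion: the theorem as stated actually holds with constant $1$ in place of $\sec^2\alpha$, confirming your closing remark that the sectorial constant is superfluous here.
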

\begin{proof}
We have
\[\begin{aligned}
\sum\limits_{i=1}^{n}{\mathfrak R{{A}_{i}}\sigma \mathfrak R{{B}_{i}}}&\le \left( \sum\limits_{i=1}^{n}{\mathfrak R{{A}_{i}}} \right)\sigma \left( \sum\limits_{i=1}^{n}{\mathfrak R{{B}_{i}}} \right) \\
& =\left( \mathfrak R\sum\limits_{i=1}^{n}{{{A}_{i}}} \right)\sigma \left( \mathfrak R\sum\limits_{i=1}^{n}{{{B}_{i}}} \right) \\
& \le \mathfrak R\left( \left( \sum\limits_{i=1}^{n}{{{A}_{i}}} \right)\sigma \left( \sum\limits_{i=1}^{n}{{{B}_{i}}} \right) \right) ,
\end{aligned}\]
where we have used \eqref{eq_ineq_sig} to obtain the last inequality. Thus, we have shown that
\[\sum\limits_{i=1}^{n}{\mathfrak R{{A}_{i}}\sigma \mathfrak R{{B}_{i}}}\le \mathfrak R\left( \left( \sum\limits_{i=1}^{n}{{{A}_{i}}} \right)\sigma \left( \sum\limits_{i=1}^{n}{{{B}_{i}}} \right) \right).\]
We also have
\[\sum\limits_{i=1}^{n}{\mathfrak R{{A}_{i}}{{\sigma }^{\bot }}\mathfrak R{{B}_{i}}}\le \mathfrak R\left( \left( \sum\limits_{i=1}^{n}{{{A}_{i}}} \right){{\sigma }^{\bot }}\left( \sum\limits_{i=1}^{n}{{{B}_{i}}} \right) \right).\]
In particular,
\[\sum\limits_{i=1}^{n}{\mathfrak R{{A}_{i}}\sharp \mathfrak R{{B}_{i}}}\le \mathfrak R\left( \left( \sum\limits_{i=1}^{n}{{{A}_{i}}} \right)\sharp \left( \sum\limits_{i=1}^{n}{{{B}_{i}}} \right) \right)\] because $\sharp^{\perp}=\sharp$.
Notice that
\[\begin{aligned}
& \left( \sum\limits_{i=1}^{n}{\mathfrak R{{A}_{i}}\sigma \mathfrak R{{B}_{i}}} \right)\sharp\left( \sum\limits_{i=1}^{n}{\mathfrak R{{A}_{i}}{{\sigma }^{\bot }}\mathfrak R{{B}_{i}}} \right) \\
& \le \mathfrak R\left( \left( \sum\limits_{i=1}^{n}{{{A}_{i}}} \right)\sigma \left( \sum\limits_{i=1}^{n}{{{B}_{i}}} \right) \right)\sharp \mathfrak R\left( \left( \sum\limits_{i=1}^{n}{{{A}_{i}}} \right){{\sigma }^{\bot }}\left( \sum\limits_{i=1}^{n}{{{B}_{i}}} \right) \right) \\
& \le {{\sec }^{2}}\alpha \;\left( \mathfrak R\left( \sum\limits_{i=1}^{n}{{{A}_{i}}} \right)\sigma \mathfrak R\left( \sum\limits_{i=1}^{n}{{{B}_{i}}} \right) \right)\sharp\left( \mathfrak R\left( \sum\limits_{i=1}^{n}{{{A}_{i}}} \right){{\sigma }^{\bot }}\mathfrak R\left( \sum\limits_{i=1}^{n}{{{B}_{i}}} \right) \right) \\
& ={{\sec }^{2}}\alpha \; \mathfrak R\left( \sum\limits_{i=1}^{n}{{{A}_{i}}} \right)\sharp \mathfrak R\left( \sum\limits_{i=1}^{n}{{{B}_{i}}} \right) ,
\end{aligned}\]
where we have used \eqref{eq_ineq_sig} to obtain the last inequality.
Further,
\[\begin{aligned}
\sum\limits_{i=1}^{n}{\mathfrak R{{A}_{i}}\sharp \mathfrak R{{B}_{i}}}&=\sum\limits_{i=1}^{n}{\left( \mathfrak R{{A}_{i}}\sigma \mathfrak R{{B}_{i}} \right)\sharp\left( \mathfrak R{{A}_{i}}{{\sigma }^{\bot }}\mathfrak R{{B}_{i}} \right)} \\
& \le \left( \sum\limits_{i=1}^{n}{\mathfrak R{{A}_{i}}\sigma \mathfrak R{{B}_{i}}} \right)\sharp\left( \sum\limits_{i=1}^{n}{\mathfrak R{{A}_{i}}{{\sigma }^{\bot }}\mathfrak R{{B}_{i}}} \right).
\end{aligned}\]
Thus,
\[\sum\limits_{i=1}^{n}{\mathfrak R{{A}_{i}}\sharp \mathfrak R{{B}_{i}}}\le \left( \sum\limits_{i=1}^{n}{\mathfrak R{{A}_{i}}\sigma \mathfrak R{{B}_{i}}} \right)\sharp\left( \sum\limits_{i=1}^{n}{\mathfrak R{{A}_{i}}{{\sigma }^{\bot }}\mathfrak R{{B}_{i}}} \right)\le {{\sec }^{2}}\alpha \; \mathfrak R\left( \sum\limits_{i=1}^{n}{{{A}_{i}}} \right)\sharp \mathfrak R\left( \sum\limits_{i=1}^{n}{{{B}_{i}}} \right),\]
which completes the proof.
\end{proof}
Another mean-convex inequality can be stated as follows.
\begin{theorem}
Let $A,B,C,D\in \Pi_n^{\alpha}.$ Then
\[\mathfrak R\left( \lambda \left( A{{\sharp}_{t}}C \right)+\left( 1-\lambda \right)\left( B{{\sharp}_{t}}D \right) \right)\le {{\sec }^{2}}\alpha \left( \mathfrak R\left( \lambda A+\left( 1-\lambda \right)B \right){{\sharp}_{t}}\mathfrak R\left( \lambda C+\left( 1-\lambda \right)D \right) \right)\]
for any $0\le t,\lambda \le 1$.
\end{theorem}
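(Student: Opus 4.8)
The plan is to reduce everything to the scalar-free combination of two standard facts: the sectorial mean inequality \eqref{eq_ineq_sig} and the joint concavity of the weighted geometric mean on positive definite matrices. First I would use the linearity of the real part to split the left-hand side as
\[
\mathfrak R\left( \lambda \left( A\sharp_t C \right)+\left( 1-\lambda \right)\left( B\sharp_t D \right) \right)
=\lambda\,\mathfrak R\left( A\sharp_t C \right)+\left( 1-\lambda \right)\mathfrak R\left( B\sharp_t D \right).
\]
Since $A,B,C,D\in\Pi_n^\alpha$, each of the four real parts $\mathfrak R A,\mathfrak R B,\mathfrak R C,\mathfrak R D$ is positive definite, so the geometric means below are all well defined in $\mathcal P_n$.

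Next I would apply the upper bound in \eqref{eq_ineq_sig}, specialized to $\sigma=\sharp_t$ (that is, $f(x)=x^t$), to each summand, obtaining
\[
\mathfrak R\left( A\sharp_t C \right)\le \sec^2\alpha\;\mathfrak R A\sharp_t\mathfrak R C
\qquad\text{and}\qquad
\mathfrak R\left( B\sharp_t D \right)\le \sec^2\alpha\;\mathfrak R B\sharp_t\mathfrak R D.
\]
Combining these with the previous display and factoring out $\sec^2\alpha$ gives
\[
\mathfrak R\left( \lambda \left( A\sharp_t C \right)+\left( 1-\lambda \right)\left( B\sharp_t D \right) \right)
\le \sec^2\alpha\left[\lambda\left(\mathfrak R A\sharp_t\mathfrak R C\right)+\left( 1-\lambda \right)\left(\mathfrak R B\sharp_t\mathfrak R D\right)\right].
\]

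The main step is then the joint concavity of the weighted geometric mean on $\mathcal P_n\times\mathcal P_n$, which yields
\[
\lambda\left(\mathfrak R A\sharp_t\mathfrak R C\right)+\left( 1-\lambda \right)\left(\mathfrak R B\sharp_t\mathfrak R D\right)
\le \left(\lambda\,\mathfrak R A+\left(1-\lambda\right)\mathfrak R B\right)\sharp_t\left(\lambda\,\mathfrak R C+\left(1-\lambda\right)\mathfrak R D\right).
\]
This is the only nontrivial ingredient, but it is classical: $\sharp_t=\sigma_f$ with $f\in\mathfrak m$ operator concave, and every operator mean is jointly concave in its two positive arguments. Finally I would invoke the linearity of the real part once more, writing $\lambda\,\mathfrak R A+\left(1-\lambda\right)\mathfrak R B=\mathfrak R\left(\lambda A+\left(1-\lambda\right)B\right)$ and similarly for the $C,D$ slot, so that the right-hand side becomes exactly $\sec^2\alpha\left(\mathfrak R\left(\lambda A+\left(1-\lambda\right)B\right)\sharp_t\mathfrak R\left(\lambda C+\left(1-\lambda\right)D\right)\right)$, which is the claimed bound. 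I expect no genuine obstacle here; the only care needed is to confirm that the positive definiteness of the real parts licenses both \eqref{eq_ineq_sig} and the joint concavity, and that the weighted geometric mean $\sharp_t$ is indeed the operator mean attached to $f(x)=x^t\in\mathfrak m$ so that \eqref{eq_ineq_sig} applies verbatim.
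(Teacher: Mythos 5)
Your proposal is correct and follows essentially the same route as the paper's own proof: split the real part by linearity, apply the upper bound of \eqref{eq_ineq_sig} with $\sigma=\sharp_t$ to each summand, invoke the joint concavity of the operator mean $\sharp_t$ on $\mathcal{P}_n\times\mathcal{P}_n$ (what the paper calls ``basic properties of means''), and recombine via linearity of $\mathfrak{R}$. The only difference is that you spell out the joint-concavity step and the membership $x^t\in\mathfrak{m}$ explicitly, which the paper leaves implicit.
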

\begin{proof}
Noting \eqref{eq_ineq_sig} and implementing basic properties of means, we have
\[\begin{aligned}
\lambda \mathfrak R\left( A{{\sharp}_{t}}C \right)+\left( 1-\lambda \right)\mathfrak R\left( B{{\sharp}_{t}}D \right)&\le {{\sec }^{2}}\alpha \left( \lambda \left( \mathfrak RA{{\sharp}_{t}}\mathfrak RC \right)+\left( 1-\lambda \right)\left( \mathfrak RB{{\sharp}_{t}}\mathfrak RD \right) \right) \\
& \le {{\sec }^{2}}\alpha \left( \left( \lambda \mathfrak RA+\left( 1-\lambda \right)\mathfrak RB \right){{\sharp}_{t}}\left( \lambda \mathfrak RC+\left( 1-\lambda \right)\mathfrak RD \right) \right) \\
& ={{\sec }^{2}}\alpha \left( \mathfrak R\left( \lambda A+\left( 1-\lambda \right)B \right){{\sharp}_{t}}\mathfrak R\left( \lambda C+\left( 1-\lambda \right)D \right) \right).
\end{aligned}\]
This completes the proof.
\end{proof}
\subsection{A sub-multiplicative result for the real part}
We have seen that the real part plays a key role in studying accretive matrices. This is due to the ability to compare Hermitian matrices only. Defining $\Phi:\mathcal{M}_n\to\mathcal{M}_n$ by $\Phi(X)=\mathfrak{R}X,$ we immediately see that $\Phi$ is a unital positive linear mapping. It is well known that for any such $\Phi$ and any $A,B\in\mathcal{P}_n$, one has \cite[Theorem 1.19]{book1}
\begin{equation*}\label{eq_supmultphi}
\Phi(B)\Phi(A)^{-1}\Phi(B)\leq \Phi(BA^{-1}B).
\end{equation*}
Notice that when $\Phi=\mathfrak{R},$ this inequality becomes an identity because of $A,B\in\mathcal{P}_n$.
Interestingly, this inequality can be extended to the following form: one matrix is positive definite, but the other is arbitrary.
\begin{theorem}
Let $A>0$. Then
	\[\mathfrak R\left( Y{{A}^{-1}}Y \right)\le \mathfrak RY\;{{A}^{-1}}\;\mathfrak RY,\]
for any $Y\in {{\mathcal M}_{n}}$.
\end{theorem}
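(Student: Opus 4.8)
The plan is to reduce everything to the Cartesian decomposition of $Y$. I would write $Y = H + iK$, where $H = \mathfrak{R}Y = \frac{Y+Y^*}{2}$ and $K = \mathfrak{I}Y = \frac{Y-Y^*}{2i}$ are both Hermitian. With this notation the right-hand side of the claimed inequality is simply $H A^{-1} H$, since $\mathfrak{R}Y = H$. Thus the entire statement is about controlling the Hermitian part of the product $Y A^{-1} Y$ by the single ``pure real-part'' term $H A^{-1} H$.

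The first substantive step is to compute the left-hand side explicitly. Since $A>0$, the matrix $A^{-1}$ is Hermitian, so that $(Y A^{-1} Y)^* = Y^* A^{-1} Y^*$, and hence
\[
\mathfrak{R}(Y A^{-1} Y) = \tfrac{1}{2}\bigl(Y A^{-1} Y + Y^* A^{-1} Y^*\bigr).
\]
I would then substitute $Y = H + iK$ and $Y^* = H - iK$ and expand both products. The decisive observation is that the skew-Hermitian cross terms $i\bigl(H A^{-1} K + K A^{-1} H\bigr)$ appearing in $Y A^{-1} Y$ are exactly cancelled by their negatives coming from $Y^* A^{-1} Y^*$, leaving the clean identity
\[
\mathfrak{R}(Y A^{-1} Y) = H A^{-1} H - K A^{-1} K.
\]

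With this identity in hand, the desired inequality $\mathfrak{R}(Y A^{-1} Y) \le H A^{-1} H$ is equivalent to $K A^{-1} K \ge 0$. This last point is immediate: since $K$ is Hermitian, $K A^{-1} K = K^* A^{-1} K$, and because $A^{-1}>0$ we get, for every $x\in\mathbb{C}^n$,
\[
\left< K A^{-1} K x, x\right> = \left< A^{-1}(Kx), Kx\right> \ge 0,
\]
so $K A^{-1} K \ge 0$. Subtracting this nonnegative term from $H A^{-1} H$ yields exactly $\mathfrak{R}(Y A^{-1} Y) = H A^{-1} H - K A^{-1} K \le H A^{-1} H = \mathfrak{R}Y\,A^{-1}\,\mathfrak{R}Y$, which is the assertion.

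I do not expect a serious obstacle in this argument; the only step requiring genuine care is the bookkeeping in the expansion, namely verifying that the imaginary cross terms cancel when the Hermitian part is formed. Once the identity $\mathfrak{R}(Y A^{-1} Y) = H A^{-1} H - K A^{-1} K$ is secured, the positivity of $K A^{-1} K$ closes the proof at once, and it also makes transparent that equality holds precisely when $\mathfrak{I}Y$ annihilates the relevant directions (e.g.\ when $Y$ is Hermitian), which recovers the identity case noted before the statement.
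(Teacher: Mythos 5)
Your proof is correct. It rests on the same algebraic cancellation as the paper's argument, but you execute it more directly. The paper first records the unweighted fact $\mathfrak{R}(X^{2})=(\mathfrak{R}X)^{2}-(\mathfrak{I}X)^{2}\le(\mathfrak{R}X)^{2}$ (which is precisely the case $A=I$ of your identity), applies it to $X=A^{-1/2}YA^{-1/2}$, and then transfers the conclusion back to $Y$ via the congruence identity $\mathfrak{R}X=A^{-1/2}\,\mathfrak{R}\bigl(A^{1/2}XA^{1/2}\bigr)\,A^{-1/2}$ followed by multiplication by $A^{1/2}$ on both sides. You instead prove the weighted identity
\[
\mathfrak{R}\left(YA^{-1}Y\right)=\mathfrak{R}Y\,A^{-1}\,\mathfrak{R}Y-\mathfrak{I}Y\,A^{-1}\,\mathfrak{I}Y
\]
in one stroke by expanding the Cartesian decomposition, and conclude from $\mathfrak{I}Y\,A^{-1}\,\mathfrak{I}Y\ge 0$. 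What your route buys: it avoids the auxiliary congruence identity and all $A^{\pm 1/2}$ bookkeeping, and it yields an exact formula with an explicit nonnegative defect term, so the equality case comes for free --- indeed, since $A^{-1}>0$ is invertible, $\mathfrak{I}Y\,A^{-1}\,\mathfrak{I}Y=0$ forces $\mathfrak{I}Y=0$, so equality holds precisely when $Y$ is Hermitian (your phrasing ``annihilates the relevant directions'' can be sharpened to this). What the paper's route buys: it isolates the reusable scalar-like inequality $\mathfrak{R}(X^{2})\le(\mathfrak{R}X)^{2}$, which the authors invoke again later (in the subsection on absolute values of accretive matrices), so their organization serves the rest of the paper rather than just this theorem.
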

\begin{proof}
We can see that for any $X\in\mathcal{M}_n$ and positive definite $A$,
\begin{equation}\label{4}
\mathfrak RX={{A}^{-\frac{1}{2}}}\;\mathfrak R\left( {{A}^{\frac{1}{2}}}X{{A}^{\frac{1}{2}}} \right)\;{{A}^{-\frac{1}{2}}}.
\end{equation}
Noting that $\mathfrak{R}X^2=(\mathfrak{R}X)^2-(\mathfrak{I}X)^2\leq (\mathfrak{R}X)^2,$ and letting $X={{A}^{-\frac{1}{2}}}Y{{A}^{-\frac{1}{2}}}$, we have
	\[\mathfrak R{{A}^{-\frac{1}{2}}}Y{{A}^{-1}}Y{{A}^{-\frac{1}{2}}}\le {{\left( \mathfrak R{{A}^{-\frac{1}{2}}}Y{{A}^{-\frac{1}{2}}} \right)}^{2}}.\]
Using \eqref{4} now, we have
\begin{equation}\label{5}
{{A}^{-\frac{1}{2}}}\;\mathfrak R\left( Y{{A}^{-1}}Y \right)\;{{A}^{-\frac{1}{2}}}\le {{A}^{-\frac{1}{2}}}\;\mathfrak RY\;{{A}^{-1}}\;\mathfrak RY\;{{A}^{-\frac{1}{2}}}.
\end{equation}
Multiplying both sides of \eqref{5} by ${{A}^{\frac{1}{2}}}$, we reach the desired result.
\end{proof}
\subsection{On the absolute value of accretive matrices}
If $X\in\Pi_n^{\alpha},$ and $0\le r\le 1$, then
\[\begin{aligned}
\frac{1}{{{\sec }^{2}}\alpha }\mathfrak R{{X}^{2r}}&\le {{\left( \mathfrak R{{X}^{2}} \right)}^{r}} \quad \text{(by Lemma \ref{lem_f2})}\\
& \le {{\left( \mathfrak RX \right)}^{2r}},
\end{aligned}\]
where the second inequality follows from the facts that $\mathfrak{R}X^2\leq (\mathfrak{R}X)^2$ for any $X\in\mathcal{M}_n$ and that $f(t)=t^r$ is operator monotone when $0\leq r\leq 1.$

Thus, we have shown that if $X\in\Pi_{n}^{\alpha}$, one has
\[\mathfrak R{{X}^{2r}}\le {{\sec }^{2}}\alpha \;{{\left( \mathfrak RX \right)}^{2r}};\text{ }0\le r\le 1.\]
In \cite{7}, it has been shown that
\begin{equation}\label{eq_sec_comp}
\left\| T \right\|\le \sec \alpha \;\left\| \mathfrak RT \right\|.
\end{equation}
In this subsection, we present refinements and further related results. More precisely, we show better bounds for $|T|$ rather than $\|T\|.$ First, we have the following basic lemmas.
\begin{lemma}\label{011_ppt}
\cite[Lemma 1]{x2} Let $A,B,C\in \mathcal{M}_n$ be such that $A,B\geq 0$. Then
\[\left[ \begin{matrix}
A & C \\
{{C}^{*}} & B \\
\end{matrix} \right]\ge 0\Leftrightarrow {{\left| \left\langle Cx,y \right\rangle \right|}^{2}}\le \left\langle Ax,x \right\rangle \left\langle By,y \right\rangle,\forall x,y\in\mathbb{C}^n.\]
\end{lemma}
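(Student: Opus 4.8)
The plan is to convert the positivity of the $2\times 2$ block matrix, which is a statement about vectors in $\mathbb{C}^{2n}$, into a single scalar quadratic inequality parametrized by two vectors in $\mathbb{C}^n$, and then to read off the stated Cauchy--Schwarz condition as the nonnegativity criterion of that quadratic. First I would expand the associated quadratic form: for $u,v\in\mathbb{C}^n$, using $\left\langle C^*u,v\right\rangle=\overline{\left\langle Cv,u\right\rangle}$ one computes
\begin{equation*}
\left\langle \left[ \begin{matrix} A & C \\ C^* & B \end{matrix}\right]\left[\begin{matrix} u \\ v\end{matrix}\right],\left[\begin{matrix} u \\ v\end{matrix}\right]\right\rangle=\left\langle Au,u\right\rangle+2\mathfrak{R}\left\langle Cv,u\right\rangle+\left\langle Bv,v\right\rangle.
\end{equation*}
Hence the block matrix is positive semidefinite precisely when $Q(u,v):=\left\langle Au,u\right\rangle+2\mathfrak{R}\left\langle Cv,u\right\rangle+\left\langle Bv,v\right\rangle\ge 0$ for all $u,v\in\mathbb{C}^n$. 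Observe that taking $v=0$ or $u=0$ recovers $A\ge 0$ and $B\ge 0$, so these hypotheses are automatic in the forward direction and are exactly what makes the right-hand side meaningful in the converse.

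For the implication from positivity to the Cauchy--Schwarz inequality, I would test $Q$ against the one-parameter family obtained by scaling one coordinate, $v\mapsto\lambda v$ with $\lambda\in\mathbb{C}$, giving $\left\langle Au,u\right\rangle+2\mathfrak{R}(\lambda\left\langle Cv,u\right\rangle)+|\lambda|^2\left\langle Bv,v\right\rangle\ge 0$. Choosing the argument of $\lambda$ so that $\lambda\left\langle Cv,u\right\rangle=-|\lambda|\,|\left\langle Cv,u\right\rangle|$ reduces this to a real quadratic $t^2\left\langle Bv,v\right\rangle-2t\,|\left\langle Cv,u\right\rangle|+\left\langle Au,u\right\rangle\ge 0$ in the variable $t=|\lambda|\ge 0$; nonnegativity of this quadratic on $[0,\infty)$ forces $|\left\langle Cv,u\right\rangle|^2\le\left\langle Au,u\right\rangle\left\langle Bv,v\right\rangle$. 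After relabeling $u,v$ as $x,y$ in accordance with the inner-product convention, this is exactly the asserted inequality $|\left\langle Cx,y\right\rangle|^2\le\left\langle Ax,x\right\rangle\left\langle By,y\right\rangle$.

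For the converse I would reverse this chain. Given the Cauchy--Schwarz bound, I estimate the cross term of $Q$ by $2|\mathfrak{R}\left\langle Cv,u\right\rangle|\le 2|\left\langle Cv,u\right\rangle|\le 2\sqrt{\left\langle Au,u\right\rangle\left\langle Bv,v\right\rangle}$, and then apply the scalar inequality $2\sqrt{st}\le s+t$ to the nonnegative numbers $s=\left\langle Au,u\right\rangle$ and $t=\left\langle Bv,v\right\rangle$ (nonnegative precisely because $A,B\ge 0$). This yields $Q(u,v)\ge\left(\sqrt{\left\langle Au,u\right\rangle}-\sqrt{\left\langle Bv,v\right\rangle}\right)^2\ge 0$ for all $u,v$, that is, the block matrix is positive semidefinite.

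I expect no conceptual obstacle, since this is the classical positivity criterion for $2\times 2$ operator matrices and the work is essentially bookkeeping. The two points needing care are the correct selection of the phase of $\lambda$ in the forward direction and the degenerate cases: if $\left\langle Bv,v\right\rangle=0$ the quadratic is linear, and its nonnegativity on $[0,\infty)$ forces $\left\langle Cv,u\right\rangle=0$, so the inequality holds trivially. The only other thing to state carefully is the slot convention for $A$ and $B$, so that the inequality produced by the expansion matches the form displayed in the lemma.
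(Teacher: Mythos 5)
The paper does not prove this lemma at all; it simply cites Kittaneh \cite{x2}, so your argument can only be checked against the statement itself, and there your proof, whose computational core is correct, hits a genuine problem at exactly the step you postponed as ``bookkeeping.'' The inequality your forward direction actually produces is
\[
\left| \left\langle Cv,u \right\rangle \right|^{2}\le \left\langle Au,u \right\rangle \left\langle Bv,v \right\rangle ,
\]
in which the vector fed into $C$ is paired with $B$ and the other vector with $A$. No relabeling of $u,v$ as $x,y$ turns this into the displayed form $\left| \left\langle Cx,y \right\rangle \right|^{2}\le \left\langle Ax,x \right\rangle \left\langle By,y \right\rangle$: matching the right-hand sides forces $x=u$, $y=v$, which changes the left-hand side to $\left| \left\langle Cy,x \right\rangle \right|=\left| \left\langle C^{*}x,y \right\rangle \right|$, while matching the left-hand sides forces $x=v$, $y=u$, which interchanges the roles of $A$ and $B$ on the right. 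This is not a convention issue that relabeling can fix, because the statement as printed is false. Indeed, take
\[
A=\begin{bmatrix}1&0\\0&0\end{bmatrix},\qquad
B=\begin{bmatrix}0&0\\0&1\end{bmatrix},\qquad
C=\begin{bmatrix}0&1\\0&0\end{bmatrix};
\]
then the block matrix equals $ww^{*}$ with $w=(1,0,0,1)^{T}$, hence is positive semidefinite, yet for $x=e_{2}$, $y=e_{1}$ one has $\left| \left\langle Cx,y \right\rangle \right|=1$ while $\left\langle Ax,x \right\rangle \left\langle By,y \right\rangle =0$.

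What your computation does prove is the corrected statement
\[
\begin{bmatrix}A&C\\C^{*}&B\end{bmatrix}\ge 0
\iff
\left| \left\langle Cx,y \right\rangle \right|^{2}\le \left\langle Ay,y \right\rangle \left\langle Bx,x \right\rangle
\quad\text{for all } x,y\in\mathbb{C}^{n},
\]
and this is the version the paper actually needs: in the proof of Theorem \ref{thm_ppt} the bound obtained is $\sqrt{\left\langle Ay,y \right\rangle \left\langle Bx,x \right\rangle}$ (with $x$ the argument of $X$), and the converse step there invokes the present lemma with precisely that pairing. Note that your own converse direction already uses the corrected hypothesis rather than the printed one: you bound $\left| \left\langle Cv,u \right\rangle \right|$ by $\sqrt{\left\langle Au,u \right\rangle \left\langle Bv,v \right\rangle}$, which is what the $\left\langle Ay,y \right\rangle \left\langle Bx,x \right\rangle$ pairing gives for those vectors, so your two directions prove and use the same true inequality --- just not the one displayed. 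With the statement emended accordingly, your argument is complete and is the standard proof, including the correct treatment of the degenerate case $\left\langle Bv,v \right\rangle =0$.
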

\begin{lemma}\label{lem_ned_ppt}
\cite[Proposition 1.3.2]{x3} Let $A,B\geq 0$. Then $\left[ \begin{matrix}
A & X \\
{{X}^{*}} & B \\
\end{matrix} \right]\geq 0$ if and only if $X={{A}^{\frac{1}{2}}}K{{B}^{\frac{1}{2}}}$ for some contraction $K$.
\end{lemma}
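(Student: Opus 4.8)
The plan is to establish the two implications separately. The ``if'' direction will follow from a single congruence identity, while the ``only if'' direction reduces, after a perturbation, to that same identity read backwards; the sole genuine difficulty is accommodating singular $A$ or $B$.

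The backbone of both directions is the factorization
\[
\begin{bmatrix} A & X \\ X^* & B\end{bmatrix}
=\begin{bmatrix} A^{\frac12} & 0 \\ 0 & B^{\frac12}\end{bmatrix}
\begin{bmatrix} I & K \\ K^* & I\end{bmatrix}
\begin{bmatrix} A^{\frac12} & 0 \\ 0 & B^{\frac12}\end{bmatrix},
\qquad X=A^{\frac12}KB^{\frac12},
\]
together with the elementary fact that $\begin{bmatrix} I & K \\ K^* & I\end{bmatrix}\ge 0$ if and only if $\|K\|\le 1$ --- the latter being exactly Lemma~\ref{011_ppt} with both diagonal blocks equal to $I$, since $|\langle Kx,y\rangle|^2\le\|x\|^2\|y\|^2$ for all $x,y$ is precisely the statement $\|K\|\le1$.

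For sufficiency I would simply read the displayed identity from right to left: given $X=A^{\frac12}KB^{\frac12}$ with $K$ a contraction, the middle factor is positive by the elementary fact, and a congruence $D^*MD$ of a positive $M$ is positive (here $D=\mathrm{diag}(A^{\frac12},B^{\frac12})$ is Hermitian), so the block matrix is positive. For necessity with $A,B$ invertible, I would conjugate the given positive block matrix by $\mathrm{diag}(A^{-\frac12},B^{-\frac12})$; the result is precisely $\begin{bmatrix} I & K \\ K^* & I\end{bmatrix}$ with $K=A^{-\frac12}XB^{-\frac12}$, which is therefore positive, forcing $\|K\|\le 1$, while $X=A^{\frac12}KB^{\frac12}$ holds by construction.

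The main obstacle is the singular case, where $A^{-\frac12}$ or $B^{-\frac12}$ need not exist. Here I would perturb: for each $\varepsilon>0$ the matrix $\begin{bmatrix} A+\varepsilon I & X \\ X^* & B+\varepsilon I\end{bmatrix}$ exceeds the given one by $\varepsilon I_{2n}$ and hence remains positive, now with invertible diagonal blocks. The invertible case then produces contractions $K_\varepsilon=(A+\varepsilon I)^{-\frac12}X(B+\varepsilon I)^{-\frac12}$ satisfying $X=(A+\varepsilon I)^{\frac12}K_\varepsilon(B+\varepsilon I)^{\frac12}$. Since the closed unit ball of $\mathcal{M}_n$ is compact, I can pass to a subsequence $\varepsilon_j\to0$ with $K_{\varepsilon_j}\to K$ and $\|K\|\le 1$; the norm-continuity of $t\mapsto(A+tI)^{\frac12}$ at $t=0$ then lets me take limits in the factorization to conclude $X=A^{\frac12}KB^{\frac12}$. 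The only delicate point is this passage to the limit, but the uniform bound $\|K_\varepsilon\|\le1$ together with continuity of the square root makes it routine.
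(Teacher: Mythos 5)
Your proposal is correct, but note that the paper itself offers no proof of this statement: it is quoted verbatim from Bhatia's \emph{Positive definite matrices} \cite[Proposition 1.3.2]{x3}, so there is no internal argument to compare against. Your route is, in essence, the standard textbook one: the congruence
\[
\begin{bmatrix} A & X \\ X^* & B\end{bmatrix}
=\begin{bmatrix} A^{\frac12} & 0 \\ 0 & B^{\frac12}\end{bmatrix}
\begin{bmatrix} I & K \\ K^* & I\end{bmatrix}
\begin{bmatrix} A^{\frac12} & 0 \\ 0 & B^{\frac12}\end{bmatrix}
\]
settles sufficiency and, after inverting the diagonal factor, necessity when $A,B>0$; the identification of positivity of the middle factor with $\|K\|\le 1$ is correctly obtained from Lemma \ref{011_ppt} specialized to identity diagonal blocks (and that specialization is legitimate, since Lemma \ref{011_ppt} is a separate, earlier-stated result, so no circularity arises). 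Your handling of the singular case is the only place where care is genuinely needed, and it is sound: the perturbed matrix is the original plus $\varepsilon I_{2n}$, hence positive with invertible diagonal blocks; the contractions $K_\varepsilon$ live in the closed unit ball of $\mathcal{M}_n$, which is compact in finite dimension, so a subsequential limit $K$ exists with $\|K\|\le 1$; and since $(A+\varepsilon I)^{\frac12}\to A^{\frac12}$ and $(B+\varepsilon I)^{\frac12}\to B^{\frac12}$ in norm while $\|K_{\varepsilon}\|\le 1$ uniformly, the factorization passes to the limit. What your argument buys, relative to the paper's bare citation, is a self-contained proof using only tools already present in the paper (Lemma \ref{011_ppt}) plus elementary finite-dimensional compactness; the cost is that it is longer than the one-line appeal to the literature the authors chose.
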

\begin{lemma}\label{lem_nd_ppt2}
\cite{x7} Let $T\geq O$. Then for any vectors $x,y\in {{\mathbb{C}}^{n}}$,
\[\left| \left\langle Tx,y \right\rangle \right|\le \frac{\left\| T \right\|}{2}\left( \left| \left\langle x,y \right\rangle \right|+\left\| y \right\|\left\| x \right\| \right).\]
\end{lemma}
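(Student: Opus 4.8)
The plan is to exploit the homogeneity of the claimed inequality in $T$ to reduce to the normalized case $\|T\|=1$, and then to extract the bound from a single self-adjoint contraction together with the triangle inequality and Cauchy--Schwarz. First I would dispose of the trivial case $T=O$, where both sides vanish. For $T\neq O$, both sides scale linearly in $T$, so I may replace $T$ by $T/\|T\|$ and thereby assume $0\leq T\leq I$.

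The key step is then to write $T=\frac{1}{2}(I-S)$, where $S=I-2T$. Since $0\leq T\leq I$ forces the eigenvalues of $T$ into $[0,1]$, the eigenvalues of $S$ lie in $[-1,1]$, so $S$ is a self-adjoint contraction with $\|S\|\leq 1$. Substituting this decomposition gives $\langle Tx,y\rangle=\frac{1}{2}\left(\langle x,y\rangle-\langle Sx,y\rangle\right)$, whence, by the triangle inequality, $|\langle Tx,y\rangle|\leq\frac{1}{2}\left(|\langle x,y\rangle|+|\langle Sx,y\rangle|\right)$. Applying Cauchy--Schwarz and $\|S\|\leq 1$ to the second term yields $|\langle Sx,y\rangle|\leq\|Sx\|\,\|y\|\leq\|x\|\,\|y\|$, and combining these two estimates produces exactly $|\langle Tx,y\rangle|\leq\frac{1}{2}\left(|\langle x,y\rangle|+\|x\|\,\|y\|\right)$. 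Rescaling by $\|T\|$ then recovers the general statement.

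The crux of the argument — what I expect to be the only genuine obstacle — is recognizing the correct reformulation. The naive route of writing $\langle Tx,y\rangle=\langle T^{1/2}x,T^{1/2}y\rangle$ and applying Cauchy--Schwarz directly only delivers the weaker bound $\|T^{1/2}x\|\,\|T^{1/2}y\|$, which does not split into the desired sum of $|\langle x,y\rangle|$ and $\|x\|\,\|y\|$. The insight that makes everything work is that a positive contraction is precisely an affine image of a self-adjoint contraction, namely $T=\frac{1}{2}(I-S)$; once this is in hand, the remaining estimates are entirely routine. I would therefore concentrate my attention on verifying that $S=I-2T$ is a contraction, which follows at once from the spectral description of $0\leq T\leq I$.
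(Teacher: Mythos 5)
Your proof is correct, and it is worth pointing out that the paper itself offers no proof of this lemma to compare against: the statement is simply imported from the reference \cite{x7} (where it appears as a Buzano-type extension of the Cauchy--Schwarz inequality). Your argument is therefore a genuine addition rather than a rederivation. Each step checks out: both sides of the inequality are positively homogeneous of degree one in $T$, so after discarding $T=O$ you may assume $\|T\|=1$, and then $T\ge 0$ together with $\|T\|=1$ indeed forces $0\le T\le I$; the matrix $S=I-2T$ is Hermitian with spectrum in $[-1,1]$, hence $\|S\|\le 1$; and the identity $\langle Tx,y\rangle=\frac{1}{2}\left(\langle x,y\rangle-\langle Sx,y\rangle\right)$ combined with the triangle inequality and $\left|\langle Sx,y\rangle\right|\le\|Sx\|\,\|y\|\le\|x\|\,\|y\|$ gives exactly the normalized claim, which rescales to the general one. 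Your closing observation is also apt: the naive factorization $\langle Tx,y\rangle=\langle T^{1/2}x,T^{1/2}y\rangle$ cannot produce the split into $\left|\langle x,y\rangle\right|$ and $\|x\|\,\|y\|$, and the affine representation $T=\frac{1}{2}(I-S)$ of a positive contraction by a Hermitian contraction is precisely the mechanism that generates the constant $\frac{1}{2}$ and the two terms. Note too that your argument directly recovers the general positive-operator statement, of which Buzano's classical inequality is the rank-one case $T=ee^{*}$; this makes your proof both self-contained and more transparent than an appeal to the cited literature.
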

Now we show the following preliminary result, which we will need.
\begin{proposition}\label{prop_ppt}
Let $A\in {{\mathcal M}_{n}}$ with the polar decomposition $A=U\left| A \right|$. Then for any vectors $x,y\in {{\mathbb{C}}^{n}}$,
\[\left| \left\langle Ax,y \right\rangle \right|\le \frac{\left\| A \right\|}{2}\left( \left| \left\langle x,{{U}^{*}}y \right\rangle \right|+\left\| {{U}^{*}}y \right\|\left\| x \right\| \right).\]
\end{proposition}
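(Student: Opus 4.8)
The plan is to reduce the statement to a direct application of Lemma \ref{lem_nd_ppt2}, which already gives the desired type of bound for a \emph{positive} operator. The key observation is that the polar decomposition lets us transfer the operator $A$ to its positive part $|A|$ at the cost of moving $U^*$ onto the second vector. First I would write, using $A=U|A|$ and the fact that $U^*$ is the Hilbert-space adjoint of $U$,
\[
\langle Ax,y\rangle=\langle U|A|x,y\rangle=\langle |A|x,U^*y\rangle,
\]
so that
\[
|\langle Ax,y\rangle|=|\langle |A|x,U^*y\rangle|.
\]

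Next I would invoke Lemma \ref{lem_nd_ppt2} with the positive semidefinite operator $T=|A|$ and the vectors $x$ and $U^*y$ in place of $y$. This immediately yields
\[
|\langle |A|x,U^*y\rangle|\le \frac{\||A|\|}{2}\left(|\langle x,U^*y\rangle|+\|U^*y\|\,\|x\|\right).
\]
Combining this with the previous line gives precisely the right-hand side claimed in the proposition, once we account for the norm.

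The only remaining ingredient is the identity $\||A|\|=\|A\|$, which follows because $|A|=(A^*A)^{1/2}$ has eigenvalues equal to the singular values of $A$, so its operator norm (largest eigenvalue of $|A|$) coincides with $\|A\|$ (largest singular value of $A$). Substituting $\||A|\|=\|A\|$ completes the argument. I do not anticipate a genuine obstacle here: the proposition is essentially Lemma \ref{lem_nd_ppt2} dressed up through the polar decomposition, and the entire content lies in recognizing the correct substitution $T=|A|$ together with the vector replacement $y\mapsto U^*y$. No estimate is lost in the reduction, which is why the constant $\tfrac12$ and the structure of the bound are inherited verbatim from the positive case.
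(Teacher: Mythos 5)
Your proof is correct and follows essentially the same route as the paper's: both apply Lemma \ref{lem_nd_ppt2} to the positive part $|A|$ with $y$ replaced by $U^*y$, use the identity $\langle Ax,y\rangle=\langle |A|x,U^*y\rangle$ coming from the polar decomposition, and invoke $\|\,|A|\,\|=\|A\|$. The only difference is the order of steps (you rewrite the inner product first and then apply the lemma, while the paper states the lemma's bound for $|A|$ first and then substitutes), which is immaterial.
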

\begin{proof}
Lemma \ref{lem_nd_ppt2} gives
\begin{equation}\label{7_ppt}
\left| \left\langle \left| A \right|x,y \right\rangle \right|\le \frac{\left\|\; \left| A \right|\; \right\|}{2}\left( \left| \left\langle x,y \right\rangle \right|+\left\| y \right\|\left\| x \right\| \right)=\frac{\left\| A \right\|}{2}\left( \left| \left\langle x,y \right\rangle \right|+\left\| y \right\|\left\| x \right\| \right)
\end{equation}
for any $A\in {{\mathbb M}_{n}}$ .
Assume that $A=U\left| A \right|$ be the polar decomposition of $A$. If we replace $y$ by ${{U}^{*}}y$, in the inequality \eqref{7_ppt}, we get
	\[\begin{aligned}
\left| \left\langle Ax,y \right\rangle \right|&=\left| \left\langle U\left| A \right|x,y \right\rangle \right| \\
& =\left| \left\langle \left| A \right|x,{{U}^{*}}y \right\rangle \right| \\
& \le \frac{\left\| A \right\|}{2}\left( \left| \left\langle x,{{U}^{*}}y \right\rangle \right|+\left\| {{U}^{*}}y \right\|\left\| x \right\| \right),
\end{aligned}\]
from which the required result follows.
\end{proof}
The next theorem will be the key tool to obtain our result about a possible bound of $T$, where $T\in\Pi_n^{\alpha}.$
\begin{theorem}\label{thm_ppt}
Let $A,X,B\in {{\mathcal M}_{n}}$. Then $\left[ \begin{matrix}
A & X \\
{{X}^{*}} & B \\
\end{matrix} \right]\geq 0$, if and only if for any vectors $x,y\in {{\mathbb{C}}^{n}}$,
\[\left| \left\langle Xx,y \right\rangle \right|\le \frac{1}{2}\left( \left| \left\langle {{A}^{\frac{1}{2}}}U{{B}^{\frac{1}{2}}}x,y \right\rangle \right|+\sqrt{\left\langle Ay,y \right\rangle \left\langle Bx,x \right\rangle } \right)\]
for some unitary $U$.
\end{theorem}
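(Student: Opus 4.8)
The plan is to prove the two implications separately, routing the block positivity through the two complementary criteria already available in the excerpt: the factorization of Lemma \ref{lem_ned_ppt} and the bilinear criterion of Lemma \ref{011_ppt}, with Proposition \ref{prop_ppt} as the analytic engine. Throughout I read the statement under the standing hypothesis $A,B\ge 0$, which is forced by the very appearance of $A^{1/2}$, $B^{1/2}$ and of $\sqrt{\langle Ay,y\rangle\langle Bx,x\rangle}$ on the right-hand side, and which is in any case necessary for the block matrix to be positive.

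For the forward implication, assume $\left[\begin{smallmatrix} A & X \\ X^* & B\end{smallmatrix}\right]\ge 0$. By Lemma \ref{lem_ned_ppt} there is a contraction $K$ with $X=A^{1/2}KB^{1/2}$, so that $\langle Xx,y\rangle=\langle KB^{1/2}x,\,A^{1/2}y\rangle$ by self-adjointness of $A^{1/2}$. I would write the polar decomposition $K=U|K|$ with $U$ \emph{unitary} (for a square matrix the partial-isometry factor can always be extended to a unitary on $\mathbb{C}^n$), and then apply Proposition \ref{prop_ppt} to $K$ with the vectors $B^{1/2}x$ and $A^{1/2}y$ in place of $x$ and $y$. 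Using $\|K\|\le 1$, unitarity of $U$ (so that $\|U^* A^{1/2}y\|=\|A^{1/2}y\|=\sqrt{\langle Ay,y\rangle}$), the identity $\|B^{1/2}x\|=\sqrt{\langle Bx,x\rangle}$, and the rewriting $\langle B^{1/2}x,\,U^* A^{1/2}y\rangle=\langle A^{1/2}UB^{1/2}x,\,y\rangle$, the two summands produced by Proposition \ref{prop_ppt} become exactly the two terms in the claimed bound, delivering the inequality for this specific $U$.

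For the converse, suppose the displayed inequality holds for some unitary $U$. The key observation is that its first term never exceeds its second: by Cauchy--Schwarz and $\|U\|=1$, $|\langle A^{1/2}UB^{1/2}x,y\rangle|=|\langle UB^{1/2}x,\,A^{1/2}y\rangle|\le \|B^{1/2}x\|\,\|A^{1/2}y\|=\sqrt{\langle Ay,y\rangle\langle Bx,x\rangle}$. Substituting this into the hypothesis collapses the right-hand side and gives $|\langle Xx,y\rangle|\le \sqrt{\langle Ay,y\rangle\langle Bx,x\rangle}$, that is $|\langle Xx,y\rangle|^2\le \langle Ay,y\rangle\langle Bx,x\rangle$ for all $x,y$. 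This is precisely the bilinear positivity condition of Lemma \ref{011_ppt}, from which $\left[\begin{smallmatrix} A & X \\ X^* & B\end{smallmatrix}\right]\ge 0$ follows.

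I expect the main obstacle to be the forward direction, specifically the bookkeeping that converts the abstract estimate of Proposition \ref{prop_ppt} into the precise inner-product expressions of the statement: one must send $A^{1/2}$ and $B^{1/2}$ to the correct sides, insist that the polar factor of $K$ be genuinely unitary rather than merely a partial isometry, and verify that $\|U^* A^{1/2}y\|$ and $\|B^{1/2}x\|$ reproduce $\sqrt{\langle Ay,y\rangle}$ and $\sqrt{\langle Bx,x\rangle}$. The converse is comparatively routine; the only care needed is to align the variable convention of Lemma \ref{011_ppt} with the derived inequality $|\langle Xx,y\rangle|^2\le\langle Ay,y\rangle\langle Bx,x\rangle$, which is the correct bilinear criterion for positivity of the block matrix.
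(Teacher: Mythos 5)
Your proposal is correct and follows essentially the same route as the paper: the forward direction factors $X=A^{1/2}KB^{1/2}$ via Lemma \ref{lem_ned_ppt}, applies Proposition \ref{prop_ppt} to $K$ (with the vectors $B^{1/2}x$, $A^{1/2}y$ and the unitary polar factor of $K$), and uses $\|K\|\le 1$; the converse uses Cauchy--Schwarz to dominate the first term by $\sqrt{\langle Ay,y\rangle\langle Bx,x\rangle}$ and then invokes Lemma \ref{011_ppt}. Your explicit remarks that $A,B\ge 0$ is implicitly assumed and that the polar factor can be taken genuinely unitary in finite dimensions are points the paper glosses over, but they do not change the argument.
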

\begin{proof}
Suppose that $\left[ \begin{matrix}
A & X \\
{{X}^{*}} & B \\
\end{matrix} \right]\geq 0$. So,
\[\begin{aligned}
\left| \left\langle Xx,y \right\rangle \right|&=\left| \left\langle {{A}^{\frac{1}{2}}}K{{B}^{\frac{1}{2}}}x,y \right\rangle \right| \quad \text{(by Lemma \ref{lem_ned_ppt})}\\
& \le \frac{\left\| K \right\|}{2}\left( \left| \left\langle {{A}^{\frac{1}{2}}}U{{B}^{\frac{1}{2}}}x,y \right\rangle \right|+\left\| {{U}^{*}}{{A}^{\frac{1}{2}}}y \right\|\left\| {{B}^{\frac{1}{2}}}x \right\| \right) \quad \text{(by Proposition \ref{prop_ppt})}\\
& \le \frac{1}{2}\left( \left| \left\langle {{A}^{\frac{1}{2}}}U{{B}^{\frac{1}{2}}}x,y \right\rangle \right|+\left\| {{U}^{*}}{{A}^{\frac{1}{2}}}y \right\|\left\| {{B}^{\frac{1}{2}}}x \right\| \right) \quad \text{(since $\left\| K \right\|\le 1$)}\\
& =\frac{1}{2}\left( \left| \left\langle {{A}^{\frac{1}{2}}}U{{B}^{\frac{1}{2}}}x,y \right\rangle \right|+\sqrt{\left\langle {{A}^{\frac{1}{2}}}U{{U}^{*}}{{A}^{\frac{1}{2}}}y,y \right\rangle \left\langle Bx,x \right\rangle } \right) \quad \text{(since $U{{U}^{*}}=I$)}\\
& =\frac{1}{2}\left( \left| \left\langle {{A}^{\frac{1}{2}}}U{{B}^{\frac{1}{2}}}x,y \right\rangle \right|+\sqrt{\left\langle Ay,y \right\rangle \left\langle Bx,x \right\rangle } \right)
\end{aligned}\]
for any vectors $x,y\in {{\mathbb{C}}^{n}}$.
For the other side, if for any $x,y\in {{\mathbb{C}}^{n}}$,
	\[\left| \left\langle Xx,y \right\rangle \right|\le \frac{1}{2}\left(\left| \left\langle {{A}^{\frac{1}{2}}}U{{B}^{\frac{1}{2}}}x,y \right\rangle \right|+\sqrt{\left\langle Ay,y \right\rangle \left\langle Bx,x \right\rangle }\right)\]
holds, for some unitary $U$, then by the Cauchy-Schwarz inequality, we have
\begin{equation*}\label{9}
\begin{aligned}
\left| \left\langle {{A}^{\frac{1}{2}}}U{{B}^{\frac{1}{2}}}x,y \right\rangle \right|&=\left| \left\langle U{{B}^{\frac{1}{2}}}x,{{A}^{\frac{1}{2}}}y \right\rangle \right| \\
& \le \left\| U{{B}^{\frac{1}{2}}}x \right\|\left\| {{A}^{\frac{1}{2}}}y \right\| \\
& =\sqrt{\left\langle U{{B}^{\frac{1}{2}}}x,U{{B}^{\frac{1}{2}}}x \right\rangle \left\langle {{A}^{\frac{1}{2}}}y,{{A}^{\frac{1}{2}}}y \right\rangle } \\
& =\sqrt{\left\langle {{B}^{\frac{1}{2}}}{{U}^{*}}U{{B}^{\frac{1}{2}}}x,x \right\rangle \left\langle Ay,y \right\rangle } \quad \text{(since ${{U}^{*}}U=I$)}\\
& =\sqrt{\left\langle Bx,x \right\rangle \left\langle Ay,y \right\rangle }.
\end{aligned}
\end{equation*}
Therefore,
	\[\left| \left\langle Xx,y \right\rangle \right|\le \sqrt{\left\langle Ay,y \right\rangle \left\langle Bx,x \right\rangle }.\]
Now, the result follows by Lemma \ref{011_ppt}.
\end{proof}
Theorem \ref{thm_ppt} can be used to obtain the following bound of the inner product of accretive matrices, which entails a refinement of \eqref{eq_sec_comp}.
\begin{corollary}
Let $T\in\Pi_{n}^{\alpha},$ and let $x,y\in\mathbb{C}^n$ be arbitrary vectors. Then
\[\left| \left\langle Tx,y \right\rangle \right|\le \frac{\sec \alpha }{2}\left( \left| \left\langle {{\left( \mathfrak RT \right)}^{\frac{1}{2}}}U{{\left( \mathfrak RT \right)}^{\frac{1}{2}}}x,y \right\rangle \right|+\sqrt{\left\langle \mathfrak RTy,y \right\rangle \left\langle \mathfrak RTx,x \right\rangle } \right)\]
for some unitary matrix $U\in\mathcal{M}_n.$
In particular,
\begin{equation}\label{eq_ned_comp_cor}
\|T\|\leq \frac{\sec \alpha }{2}\left( r\left( U\mathfrak RT \right)+\left\| \mathfrak RT \right\| \right),
\end{equation}
where $r(\cdot)$ is the spectral radius.
\end{corollary}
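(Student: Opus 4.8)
The plan is to obtain both displayed inequalities as consequences of Theorem~\ref{thm_ppt} applied to a single positive semidefinite block matrix assembled from $T$ and its real part. Write $R=\mathfrak R T$ and $J=\mathfrak I T$, so that $T=R+iJ$ with $R>0$ since $T\in\Pi_n^\alpha$. I would feed Theorem~\ref{thm_ppt} the choice $A=B=\sec\alpha\,R$ and $X=T$, applied to the matrix
\[
\begin{bmatrix} \sec\alpha\,R & T\\ T^{*} & \sec\alpha\,R\end{bmatrix}.
\]
Because $A^{1/2}=\sqrt{\sec\alpha}\,R^{1/2}$, the term $A^{1/2}UB^{1/2}$ becomes $\sec\alpha\,R^{1/2}UR^{1/2}$ while $\sqrt{\langle Ay,y\rangle\langle Bx,x\rangle}=\sec\alpha\,\sqrt{\langle Ry,y\rangle\langle Rx,x\rangle}$; factoring out the common $\sec\alpha$ then reproduces verbatim the asserted inner product estimate, with $U$ the unitary furnished by the theorem.

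Everything therefore rests on the positivity of that block matrix, which I would establish by a Schur complement argument. Since $\sec\alpha\,R>0$, the matrix is positive semidefinite if and only if $\sec\alpha\,R\ge \cos\alpha\,T^{*}R^{-1}T$. Using $T=R+iJ$ one computes $T^{*}R^{-1}T=R+JR^{-1}J$, so the condition collapses to $\tan^{2}\alpha\,R\ge JR^{-1}J$; conjugating by $R^{-1/2}$ rewrites this as $\|R^{-1/2}JR^{-1/2}\|\le\tan\alpha$. This is exactly the sectoriality of $T$, since $T\in\Pi_n^\alpha$ is equivalent to the operator inequality $-\tan\alpha\,R\le J\le\tan\alpha\,R$. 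This portion is a routine computation and I do not anticipate trouble there.

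For the ``in particular'' statement I would take the supremum over unit vectors $x,y$. The second summand is harmless: $\sup_{\|x\|=\|y\|=1}\sqrt{\langle Ry,y\rangle\langle Rx,x\rangle}=\|R\|$. The first summand yields $\sup_{\|x\|=\|y\|=1}|\langle R^{1/2}UR^{1/2}x,y\rangle|=\|R^{1/2}UR^{1/2}\|$, and this is where the essential difficulty sits: the statement records this quantity as the spectral radius $r(U R)$. The natural link is the similarity $R^{1/2}UR^{1/2}=R^{1/2}(UR)R^{-1/2}$, which gives $r(R^{1/2}UR^{1/2})=r(UR)$; but similarity preserves only the spectrum, not the operator norm, so replacing $\|R^{1/2}UR^{1/2}\|$ by $r(UR)$ is not automatic and would need a genuine extra ingredient (some normaloid-type property of $R^{1/2}UR^{1/2}$). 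I expect this norm-to-spectral-radius passage to be the main obstacle of the argument and would scrutinise it carefully. What does come out unconditionally is the slightly weaker $\|T\|\le\frac{\sec\alpha}{2}\big(\|R^{1/2}UR^{1/2}\|+\|R\|\big)$, which already refines \eqref{eq_sec_comp} because $\|R^{1/2}UR^{1/2}\|\le\|R^{1/2}\|^{2}=\|R\|$ forces the bracket to be at most $2\|R\|$.
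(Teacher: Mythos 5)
Your treatment of the first inequality is essentially the paper's own proof: Theorem~\ref{thm_ppt} applied with $A=B=\sec\alpha\,\mathfrak RT$ and $X=T$, after which the common factor $\sec\alpha$ is pulled out. The only difference is how the positivity of $\left[\begin{smallmatrix}\sec\alpha\,\mathfrak RT & T\\ T^{*} & \sec\alpha\,\mathfrak RT\end{smallmatrix}\right]$ is obtained: the paper quotes it as \eqref{8} from \cite[Theorem 2.2]{6}, while you derive it by a Schur-complement computation. Your computation is correct ($T^{*}R^{-1}T=R+JR^{-1}J$, and $JR^{-1}J\le\tan^{2}\alpha\,R$ is equivalent to $-\tan\alpha\,R\le J\le\tan\alpha\,R$, which is sectoriality), so this part is complete and, as a bonus, self-contained. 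Incidentally, you also write the lower-left block correctly as $T^{*}$, which is what Theorem~\ref{thm_ppt} requires; the display \eqref{8} in the paper has a typo there.

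Your suspicion about the ``in particular'' part is exactly right, and the obstacle you flagged is not a deficiency of your argument relative to the paper's: it is a genuine gap in the paper's own proof. In \eqref{9} the paper passes from $\|(\mathfrak RT)^{\frac12}U(\mathfrak RT)^{\frac12}\|$ to $r\big((\mathfrak RT)^{\frac12}U(\mathfrak RT)^{\frac12}\big)$ by simply asserting equality, and then uses the similarity $r\big((\mathfrak RT)^{\frac12}U(\mathfrak RT)^{\frac12}\big)=r(U\mathfrak RT)$ (this last step is the one that is fine). The asserted equality of norm and spectral radius holds when the middle factor is positive (then $P^{\frac12}XP^{\frac12}\ge0$), which is presumably the source of the slip, but it is false for a unitary middle factor, even for the specific $U$ produced by the construction. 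Concretely, take $T=\left(\begin{smallmatrix}9 & 3i\\ 3i & 1\end{smallmatrix}\right)\in\Pi_{2}^{\pi/4}$, so $\mathfrak RT=\mathrm{diag}(9,1)$; here the contraction $K$ of Lemma~\ref{lem_ned_ppt} is itself unitary, so $U=K=\tfrac{1}{\sqrt2}\left(\begin{smallmatrix}1 & i\\ i & 1\end{smallmatrix}\right)$ and $(\mathfrak RT)^{\frac12}U(\mathfrak RT)^{\frac12}=\tfrac{1}{\sqrt2}T$. The claimed equality would then force $\|T\|=r(T)$, but $\|T\|=\sqrt{50+\sqrt{2176}}\approx 9.83$ while $r(T)=5+\sqrt7\approx 7.65$. (In this example the final bound \eqref{eq_ned_comp_cor} itself happens to hold, $9.83\le 10.19$, so the corollary is not refuted; only its proof is.) Thus the unconditional estimate you end with, $\|T\|\le\frac{\sec\alpha}{2}\big(\|(\mathfrak RT)^{\frac12}U(\mathfrak RT)^{\frac12}\|+\|\mathfrak RT\|\big)$, which still refines \eqref{eq_sec_comp}, is all that this method actually delivers; the normaloid-type ingredient you asked for is missing from the paper as well.
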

\begin{proof}
It has been shown in \cite[Theorem 2.2]{6} that if $T\in\Pi_{n}^{\alpha}$, then
\begin{equation}\label{8}
\left[ \begin{matrix}
\sec \alpha \mathfrak RT & T \\
T & \sec \alpha \mathfrak RT \\
\end{matrix} \right]\ge 0.
\end{equation}
Now Theorem \ref{thm_ppt} implies the first desired inequality. For the second inequality, take the supremum over all unit vectors $x,y$ in the first inequality to get
\begin{equation}\label{9}
\begin{aligned}
\left\| T \right\|&\le \frac{\sec \alpha }{2}\left( \left\| {{\left( \mathfrak RT \right)}^{\frac{1}{2}}}U{{\left( \mathfrak RT \right)}^{\frac{1}{2}}} \right\|+\left\| \mathfrak RT \right\| \right) \\
& =\frac{\sec \alpha }{2}\left( r\left( {{\left( \mathfrak RT \right)}^{\frac{1}{2}}}U{{\left( \mathfrak RT \right)}^{\frac{1}{2}}} \right)+\left\| \mathfrak RT \right\| \right) \\
& =\frac{\sec \alpha }{2}\left( r\left( U\mathfrak RT \right)+\left\| \mathfrak RT \right\| \right).
\end{aligned}
\end{equation}
This completes the proof.
\end{proof}
\begin{remark}
The inequality \eqref{9} is a refinement of \eqref{eq_sec_comp}, since
\[\begin{aligned}
r\left( U\mathfrak RT \right) & \le \left\| U\mathfrak RT \right\| = \left\| \mathfrak RT \right\|.
\end{aligned}\]
\end{remark}
Now we show the main result in this subsection.
\begin{theorem}\label{7}
Let $T\in \Pi _{n}^{\alpha }$. Then
\[\left| T \right|\le \sec \alpha \left| {{\left( \mathfrak RT \right)}^{\frac{1}{2}}}U{{\left( \mathfrak RT \right)}^{\frac{1}{2}}} \right|\]
for some unitary $U$.
\end{theorem}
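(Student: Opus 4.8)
The plan is to turn the asserted absolute-value inequality into a spectral comparison, using the factorization hidden in the block matrix \eqref{8}. Write $R:=\mathfrak RT$, which is positive definite because $T\in\Pi_n^\alpha$. The block matrix \eqref{8} is positive semidefinite with both diagonal blocks equal to $\sec\alpha\,R$; feeding it into Lemma \ref{lem_ned_ppt} (with $A=B=\sec\alpha\,R$ and off-diagonal entry $T$) yields a contraction $K$ with
\[
T=\sec\alpha\,R^{1/2}KR^{1/2}.
\]
Since the absolute value is positively homogeneous, $|T|=\sec\alpha\,\bigl|R^{1/2}KR^{1/2}\bigr|$, so the theorem reduces to producing a unitary $U$ with $\bigl|R^{1/2}KR^{1/2}\bigr|\le\bigl|R^{1/2}UR^{1/2}\bigr|$.

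To handle this I would square and use operator monotonicity. Both sides are positive semidefinite, with $\bigl|R^{1/2}KR^{1/2}\bigr|^2=R^{1/2}K^*RKR^{1/2}$ and $\bigl|R^{1/2}UR^{1/2}\bigr|^2=R^{1/2}U^*RUR^{1/2}$. Because $t\mapsto t^{1/2}$ is operator monotone (L\"owner--Heinz), it suffices to compare the squares, and after a congruence by the invertible matrix $R^{-1/2}$ this amounts to finding a unitary $U$ with
\[
K^*RK\le U^*RU .
\]
Thus everything collapses to a Hermitian question about a contraction $K$ and a positive definite $R$.

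The crux, and the step I expect to be the main obstacle, is establishing the spectral domination $\lambda_i(K^*RK)\le\lambda_i(R)$ for all $i$, with eigenvalues ordered decreasingly. Here I would use that $K^*RK=(K^*R^{1/2})(R^{1/2}K)$ has the same eigenvalues as $(R^{1/2}K)(K^*R^{1/2})=R^{1/2}KK^*R^{1/2}$, since $XY$ and $YX$ are isospectral; and that $KK^*\le I$ forces $R^{1/2}KK^*R^{1/2}\le R$, whence Weyl's monotonicity of eigenvalues gives $\lambda_i(K^*RK)\le\lambda_i(R)$.

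Finally I would invoke the standard fact that such eigenvalue domination guarantees a unitary conjugate dominating in the operator order: diagonalizing $Q:=K^*RK$ and choosing $U$ so that $U^*RU$ is diagonal in the same eigenbasis with the eigenvalues of $R$ placed in the matching decreasing order makes $U^*RU-Q$ diagonal with nonnegative entries, hence $U^*RU\ge K^*RK$. Unwinding the reductions (congruence by $R^{1/2}$, then the operator square root, then multiplication by $\sec\alpha$) delivers $|T|\le\sec\alpha\,\bigl|R^{1/2}UR^{1/2}\bigr|$. The one subtlety to flag is that the passage from $K^*RK\le U^*RU$ to the absolute-value inequality must go through operator monotonicity of the square root rather than any naive squaring, since squaring is not order preserving.
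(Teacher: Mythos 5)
Your proof is correct, and it takes a genuinely different route from the paper's. Both arguments funnel through the same two endpoints --- the sectorial block positivity \eqref{8} and a final appeal to operator monotonicity of $t\mapsto t^{1/2}$ --- but the unitary $U$ is manufactured by different mechanisms. The paper proves the underlying lemma (block positivity with diagonal blocks $A,B$ and corner $X$ implies $|X|^2\le |A^{1/2}UB^{1/2}|^2$ for some unitary $U$) via the Fujii--Fujii--Nakamoto Riccati characterization of \cite{FUJII}: it writes $X=C^*B^{1/2}$ with $C^*C\le A$, takes the polar decomposition $C=V|C|$, and extracts $U=V^*$ through a congruence computation; this doubles as a new proof of the Horn--Mathias result cited as \cite{5}. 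You instead start from the contraction factorization of Lemma \ref{lem_ned_ppt} --- which the paper quotes but does not use in this particular proof --- to write $T=\sec\alpha\, R^{1/2}KR^{1/2}$ with $R=\mathfrak R T>0$ and $\|K\|\le 1$, reduce via congruence by $R^{-1/2}$ and L\"{o}wner--Heinz to finding a unitary with $K^*RK\le U^*RU$, and then obtain $U$ by a purely spectral argument: isospectrality of $(K^*R^{1/2})(R^{1/2}K)$ and $(R^{1/2}K)(K^*R^{1/2})$, Weyl monotonicity together with $KK^*\le I$ to get $\lambda_i(K^*RK)\le \lambda_i(R)$, and matched diagonalization to upgrade eigenvalue domination to $K^*RK\le U^*RU$. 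Every step checks out, including the subtlety you correctly flag that the passage to absolute values must go through the operator-monotone square root rather than any squaring. What the paper's route buys is a more explicit unitary (the adjoint of a polar factor) obtained in one stroke for arbitrary $A,B$; what your route buys is independence from the Riccati machinery, using only a lemma already stated in the paper plus standard eigenvalue facts, and although you phrase it for the special case $A=B=\sec\alpha\,\mathfrak R T$, your argument generalizes verbatim to arbitrary positive semidefinite diagonal blocks.
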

\begin{proof}
We know that if $\left[ \begin{matrix}
A & X \\
{{X}^{*}} & B \\
\end{matrix} \right]\ge 0$, then \cite{5}
	\[{{X}^{*}}X\le {{B}^{\frac{1}{2}}}{{U}^{*}}AU{{B}^{\frac{1}{2}}},\text{ for some unitary }U.\]
Here we present another proof of this result using a different method. It has been shown in \cite[Theorem 7]{FUJII} that $\left[ \begin{matrix}
A & X \\
{{X}^{*}} & B \\
\end{matrix} \right]\ge 0$ if and only if there is an operator $C$ such that $X={{C}^{*}}{{B}^{\frac{1}{2}}}$ and ${{C}^{*}}C\le A$. We can write
	\[{{X}^{*}}X={{\left| X \right|}^{2}}={{B}^{\frac{1}{2}}}C{{C}^{*}}{{B}^{\frac{1}{2}}}={{B}^{\frac{1}{2}}}{{\left| {{C}^{*}} \right|}^{2}}{{B}^{\frac{1}{2}}}.\]
Thus,
	\[{{B}^{-\frac{1}{2}}}{{\left| X \right|}^{2}}{{B}^{-\frac{1}{2}}}={{\left| {{C}^{*}} \right|}^{2}}.\]
Let $C=V\left| C \right|$ is the polar decomposition of $C$. We have
	\[{{V}^{*}}\left( {{B}^{-\frac{1}{2}}}{{\left| X \right|}^{2}}{{B}^{-\frac{1}{2}}} \right)V={{V}^{*}}{{\left| {{C}^{*}} \right|}^{2}}V={{\left| C \right|}^{2}}={{C}^{*}}C.\]
Therefore, by the assumption,
	\[{{V}^{*}}\left( {{B}^{-\frac{1}{2}}}{{\left| X \right|}^{2}}{{B}^{-\frac{1}{2}}} \right)V\le A.\]
So,
	\[{{\left| X \right|}^{2}}\le {{B}^{\frac{1}{2}}}\left( VA{{V}^{*}} \right){{B}^{\frac{1}{2}}}=\left( {{B}^{\frac{1}{2}}}V{{A}^{\frac{1}{2}}} \right)\left( {{A}^{\frac{1}{2}}}{{V}^{*}}{{B}^{\frac{1}{2}}} \right)={{\left| {{A}^{\frac{1}{2}}}{{V}^{*}}{{B}^{\frac{1}{2}}} \right|}^{2}}.\]
The proof is complete by assigning ${{V}^{*}}$ to new unitary $U$. Consequently, we showed that
\[{{\left| X \right|}^{2}}\le {{\left| {{A}^{\frac{1}{2}}}U{{B}^{\frac{1}{2}}} \right|}^{2}}.\]
Since the function $f\left( t \right)=\sqrt{t}$ is operator monotone, we get
\[\left| X \right|\le \left| {{A}^{\frac{1}{2}}}U{{B}^{\frac{1}{2}}} \right|.\]
By \eqref{8},
	\[\left[ \begin{matrix}
\sec \alpha \mathfrak RT & T \\
{{T}^{*}} & \sec \alpha \mathfrak RT \\
\end{matrix} \right]\ge 0.\]
Combining the two inequalities above, we get the desired result.
\end{proof}
\begin{remark}
It follows from Theorem \ref{7} that
\[\begin{aligned}
\left\| T \right\|&=\left\|\; \left| T \right|\; \right\| \\
& \le \sec \alpha \;\left\| \;\left| {{\left( \mathfrak RT \right)}^{\frac{1}{2}}}U{{\left( \mathfrak RT \right)}^{\frac{1}{2}}} \right| \;\right\| \\
& =\sec \alpha \;\left\| {{\left( \mathfrak RT \right)}^{\frac{1}{2}}}U{{\left( \mathfrak RT \right)}^{\frac{1}{2}}} \right\| \\
& =\sec \alpha \; r\left( U\mathfrak RT \right).
\end{aligned}\]
Therefore,
\[\left\| T \right\|\le \sec \alpha \; r\left( U\mathfrak RT \right);\]
which is a significant refinement of \eqref{eq_ned_comp_cor} and \eqref{eq_sec_comp}.
\end{remark}
\section{On the difference of two perspectives}
Let $\sigma_f$ be a matrix mean related to matrix monotone function $f\in\mathfrak{m}$. Then $A\sigma_f B:=A^{1/2}f\left(A^{-1/2}BA^{-1/2}\right)A^{1/2}$ is often sometimes called a perspective \cite{3}. It is not hard to check that the function $\ln_t(x):=\dfrac{x^t-1}{t}$ defined on $x>0$ with $0<t\le 1$, is matrix monotone.
Tsallis relative operator entropy is defined as
\begin{equation*}\label{definition00}
T_t(A|B):=A\sigma_{\ln_t}B=A^{1/2}\ln_t\left(A^{-1/2}BA^{-1/2}\right)A^{1/2}=\dfrac{A\sharp_tB-A}{t}.
\end{equation*}
In \cite{2}, the mathematical properties of $T_t(A|B)$ as the Tsallis relative operator entropy were studied.

We may define a difference between two perspectives as
\begin{equation*}\label{definition02}
{{D}_{f,g}}\left( A|B \right)=A{{\sigma }_{f}}B-A{{\sigma }_{g}}B,
\end{equation*}
for $f,g\in\mathfrak{m}$.
Here we mention some examples.
\begin{itemize}
\item[(i)] If we take $f(x):=(1-t)+tx$ and $g(x)=x^t$ for $t\in[0,1]$, then
$D_{f,g}(A|B)=A\nabla_t B-A\sharp_tB$, where $\nabla_t$ and $\sharp_t$ are the means associated with $f$ and $g$ respectively.
\item[(ii)] If we take $f(x):=\dfrac{x^t-1}{t}+1$, $g(x):=1$, then we get
$D_{f,g}(A|B)=T_t(A|B)$ the Tsallis relative operator entropy.
In addition, if we take $f(x):=\log x+1$, $g(x):=1$, then we get
$D_{f,g}(A|B)=S(A|B)$ the relative operator entropy.
\item[(iii)]If we take $f(x):=\dfrac{x^t-1}{t}+1$, $g(x):=\log x+1$, then
$D_{f,g}(A|B)=T_t(A|B)-S(A|B)$, which gives the difference between the Tsallis relative operator entropy and the relative operator entropy. And it is known that $S(A|B)\le T_t(A|B)$ for $0<t\le 1$.
\end{itemize}
In this section, we study $D_{f,g}(A|B)$ for accretive matrices $A,B$; as a new track in this research field.
\begin{theorem}
Let $A,B\in \Pi _{n}^{\alpha }$ and $f,g\in\mathfrak{m}$. Then for any invertible $C\in {{\mathcal M}_{n}}$,
\[{{C}^{*}}{{D}_{f,g}}\left( A| B \right)C={{D}_{f,g}}\left( {{C}^{*}}AC| {{C}^{*}}BC \right).\]
\end{theorem}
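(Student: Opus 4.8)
The plan is to prove the congruence-covariance of $D_{f,g}$ by first establishing the analogous covariance for a single perspective $\sigma_f$, and then subtracting. Concretely, I would first show that for any $f\in\mathfrak{m}$, any $A,B\in\Pi_n^{\alpha}$, and any invertible $C\in\mathcal{M}_n$, one has
\[
C^*\left(A\sigma_f B\right)C=\left(C^*AC\right)\sigma_f\left(C^*BC\right).
\]
Granting this, the theorem follows by a one-line subtraction: since $D_{f,g}(A|B)=A\sigma_f B-A\sigma_g B$ and $X\mapsto C^*XC$ is linear, we get
\[
C^*D_{f,g}(A|B)C=C^*(A\sigma_f B)C-C^*(A\sigma_g B)C=(C^*AC)\sigma_f(C^*BC)-(C^*AC)\sigma_g(C^*BC),
\]
which is exactly $D_{f,g}(C^*AC|C^*BC)$.

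So the real content is the single-mean transformer identity. First I would recall that the operator mean is defined by the same formula \eqref{eq_sigma} in the accretive setting, namely $A\sigma_f B=A^{1/2}f\!\left(A^{-1/2}BA^{-1/2}\right)A^{1/2}$, where the fractional powers and $f$ are interpreted via the analytic functional calculus \eqref{eq_int} (legitimate because $A\in\Pi_n$ has spectrum off $(-\infty,0]$, and likewise $A^{-1/2}BA^{-1/2}$ is accretive so $f$ applies). The standard route in the positive-definite case is the \emph{transformer equality}: for invertible $C$, the congruence $X\mapsto C^*XC$ commutes with every operator mean. In the Hermitian positive-definite world this is a known axiom/property of Kubo--Ando means, and the accretive extension in \cite{1} is built to preserve these algebraic identities. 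The clean way to prove it is to reduce to the geometric-mean-type building block: writing $Z=C^*AC$, one checks $Z^{1/2}f(Z^{-1/2}(C^*BC)Z^{-1/2})Z^{1/2}$ equals $C^*A^{1/2}f(A^{-1/2}BA^{-1/2})A^{1/2}C$ by tracking how the similarity $A^{-1/2}BA^{-1/2}$ transforms to a similar matrix under the change $A\mapsto C^*AC$. The key algebraic fact is that $Z^{-1/2}(C^*BC)Z^{-1/2}$ and $A^{-1/2}BA^{-1/2}$ are \emph{similar} (conjugate by an invertible matrix), so the functional calculus intertwines: $f$ of similar matrices are similar by the same conjugation, and the outer $Z^{1/2}=(C^*AC)^{1/2}$ factors absorb the conjugating matrices.

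The hard part will be making the similarity/functional-calculus intertwining rigorous in the \emph{non-normal} accretive setting, since $A^{-1/2}BA^{-1/2}$ need not be Hermitian and $Z^{1/2}\neq C^*A^{1/2}$ (the square-root map is not multiplicative). The right device is to write $Z^{1/2}=C^*A^{1/2}W$ for a suitable invertible $W$ (the polar-type correction factor arising because $(C^*AC)^{1/2}$ and $C^*A^{1/2}$ differ by an invertible transform), verify that $W$ is exactly the conjugator carrying $A^{-1/2}BA^{-1/2}$ to $Z^{-1/2}(C^*BC)Z^{-1/2}$, and then invoke the functional-calculus relation $f(W^{-1}TW)=W^{-1}f(T)W$ for similar matrices with spectra off $(-\infty,0]$ (valid for the analytic continuation of $f\in\mathfrak{m}$ via \eqref{eq_int} and a contour-deformation argument). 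Once the $W$'s cancel against the outer factors, the identity collapses to $C^*(A\sigma_f B)C$. I would expect the cleanest presentation to cite the transformer equality for accretive means already established in \cite{1} rather than re-deriving the $W$-bookkeeping, in which case the whole theorem reduces to the trivial linear subtraction displayed above.
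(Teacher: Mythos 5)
Your proposal is correct and matches the paper's proof in structure: the paper likewise writes $D_{f,g}$ as a difference of two perspectives, uses linearity of $X\mapsto C^*XC$, and invokes the transformer identity $C^*(A\sigma_f B)C=(C^*AC)\sigma_f(C^*BC)$ for accretive means (which it treats as known from the theory developed in \cite{1}, exactly as you anticipate in your closing remark). Your extra sketch of the transformer identity via the conjugator $W$ with $Z^{1/2}=C^*A^{1/2}W$, $WZ^{1/2}=A^{1/2}C$ is sound and goes beyond what the paper records, but the overall route is the same.
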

\begin{proof}
We have
\[\begin{aligned}
{{C}^{*}}{{D}_{f,g}}\left( A|B \right)C&={{C}^{*}}\left( {A{{\sigma }_{f}}B-A{{\sigma }_{g}}B} \right)C \\
& ={{{C}^{*}}\left( A{{\sigma }_{f}}B \right)C-{{C}^{*}}\left( A{{\sigma }_{g}}B \right)C} \\
& ={{{C}^{*}}\left( A{{\sigma }_{f}}B \right)C-{{C}^{*}}\left( A{{\sigma }_{g}}B \right)C} \\
& ={{{C}^{*}}AC{{\sigma }_{f}}{{C}^{*}}BC-{{C}^{*}}AC{{\sigma }_{g}}{{C}^{*}}BC} \\
& ={{D}_{f,g}}\left( {{C}^{*}}AC|{{C}^{*}}BC \right).
\end{aligned}\]
\end{proof}
With the inclusion of real parts of sectorial matrices, one may obtain further bounds as follows.
\begin{theorem}\label{thm_entropy_1}
Let $A,B\in \Pi _{n}^{\alpha }$ and $f,g\in\mathfrak{m}$. Then
\begin{eqnarray*}
&& {{D}_{f,g}}\left( \mathfrak RA\text{ }\!\!|\!\!\text{ }\mathfrak RB \right)+\left({1-{{\sec }^{2}}\alpha }\right)\left( \mathfrak RA{{\sigma }_{g}}\mathfrak RB \right)\\
&& \le \mathfrak R\left( {{D}_{f,g}}\left( A\text{ }\!\!|\!\!\text{ }B \right) \right)\\
&& \le
{{D}_{f,g}}\left( \mathfrak RA| \mathfrak RB \right)+\left({{{\sec }^{2}}\alpha -1}\right)\left( \mathfrak RA{{\sigma }_{f}}\mathfrak RB \right).
\end{eqnarray*}
\end{theorem}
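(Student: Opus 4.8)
The plan is to exploit the linearity of the real-part map together with the two-sided estimate \eqref{eq_ineq_sig}, which already pins down $\mathfrak{R}(A\sigma B)$ between $\mathfrak{R}A\sigma\mathfrak{R}B$ and $\sec^2\alpha\;\mathfrak{R}A\sigma\mathfrak{R}B$ for every operator mean $\sigma$. The whole theorem is then a bookkeeping exercise in choosing which end of each estimate to use.

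First I would unfold the definition. Since $D_{f,g}(A|B)=A\sigma_f B-A\sigma_g B$ and $\mathfrak{R}$ is (real-)linear, we have $\mathfrak{R}\!\left(D_{f,g}(A|B)\right)=\mathfrak{R}(A\sigma_f B)-\mathfrak{R}(A\sigma_g B)$. Next I would apply \eqref{eq_ineq_sig} separately to $\sigma_f$ and to $\sigma_g$, recording the two chains $\mathfrak{R}A\sigma_f\mathfrak{R}B\le\mathfrak{R}(A\sigma_f B)\le\sec^2\alpha\;\mathfrak{R}A\sigma_f\mathfrak{R}B$ and $\mathfrak{R}A\sigma_g\mathfrak{R}B\le\mathfrak{R}(A\sigma_g B)\le\sec^2\alpha\;\mathfrak{R}A\sigma_g\mathfrak{R}B$; this is legitimate because $A,B\in\Pi_n^\alpha$ are exactly the hypotheses under which \eqref{eq_ineq_sig} holds.

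For the upper bound of $\mathfrak{R}(D_{f,g}(A|B))$ I would take the upper estimate for the $\sigma_f$ term and the lower estimate for the $\sigma_g$ term, obtaining
\[
\mathfrak{R}(A\sigma_f B)-\mathfrak{R}(A\sigma_g B)\le \sec^2\alpha\;(\mathfrak{R}A\sigma_f\mathfrak{R}B)-(\mathfrak{R}A\sigma_g\mathfrak{R}B).
\]
Writing $\sec^2\alpha\;(\mathfrak{R}A\sigma_f\mathfrak{R}B)=(\mathfrak{R}A\sigma_f\mathfrak{R}B)+(\sec^2\alpha-1)(\mathfrak{R}A\sigma_f\mathfrak{R}B)$ and regrouping $\mathfrak{R}A\sigma_f\mathfrak{R}B-\mathfrak{R}A\sigma_g\mathfrak{R}B=D_{f,g}(\mathfrak{R}A|\mathfrak{R}B)$ recovers the claimed right-hand side. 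Symmetrically, for the lower bound I would use the lower estimate for the $\sigma_f$ term and the upper estimate for the $\sigma_g$ term, giving
\[
\mathfrak{R}(A\sigma_f B)-\mathfrak{R}(A\sigma_g B)\ge (\mathfrak{R}A\sigma_f\mathfrak{R}B)-\sec^2\alpha\;(\mathfrak{R}A\sigma_g\mathfrak{R}B),
\]
and the same regrouping, now isolating the $(1-\sec^2\alpha)(\mathfrak{R}A\sigma_g\mathfrak{R}B)$ term, yields the claimed left-hand side.

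I do not expect a genuine obstacle here; the argument is essentially a one-line application of \eqref{eq_ineq_sig} to each perspective followed by subtraction. The only point that requires care is the directionality: because the two terms enter $D_{f,g}$ with opposite signs, one must pair an upper bound with a lower bound (and vice versa) so that the inequalities point the correct way after taking the difference. Keeping $\sigma_f$ and $\sigma_g$ straight in the two regroupings is the sole place where a sign slip could occur.
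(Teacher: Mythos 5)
Your proposal is correct and follows essentially the same route as the paper's own proof: write $\mathfrak R\left(D_{f,g}(A|B)\right)=\mathfrak R(A\sigma_f B)-\mathfrak R(A\sigma_g B)$ by linearity, apply the two-sided estimate \eqref{eq_ineq_sig} to each mean with the directions crossed (upper for the term entering positively, lower for the term entering negatively, and vice versa), and regroup to isolate $D_{f,g}(\mathfrak RA|\mathfrak RB)$ plus the $(\sec^2\alpha-1)$ or $(1-\sec^2\alpha)$ correction term. Your sign pairings and regroupings match the paper's computation exactly.
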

\begin{proof}
We have
\[\begin{aligned}
\mathfrak R\left( {{D}_{f,g}}\left( A| B \right) \right)&=\mathfrak R\left( {A{{\sigma }_{f}}B-A{{\sigma }_{g}}B} \right) \\
& ={\mathfrak R\left( A{{\sigma }_{f}}B \right)-\mathfrak R\left( A{{\sigma }_{g}}B \right)} \\
& \ge {\mathfrak RA{{\sigma }_{f}}\mathfrak RB-{{\sec }^{2}}\alpha \left( \mathfrak RA{{\sigma }_{g}}\mathfrak RB \right)} \\
& ={\mathfrak RA{{\sigma }_{f}}\mathfrak RB-\mathfrak RA{{\sigma }_{g}}\mathfrak RB}+\left({1-{{\sec }^{2}}\alpha }\right)\left( \mathfrak RA{{\sigma }_{g}}\mathfrak RB \right) \\
& ={{D}_{f,g}}\left( \mathfrak RA|\mathfrak RB \right)+\left({1-{{\sec }^{2}}\alpha }\right)\left( \mathfrak RA{{\sigma }_{g}}\mathfrak RB \right),
\end{aligned}\]
where we have used \eqref{eq_ineq_sig} to obtain the first inequality in these computations.
Noting \eqref{eq_ineq_sig}, we also have
\[\begin{aligned}
\mathfrak R\left( {{D}_{f,g}}\left( A| B \right) \right)&=\mathfrak R\left( {A{{\sigma }_{f}}B-A{{\sigma }_{g}}B} \right) \\
& ={\mathfrak R\left( A{{\sigma }_{f}}B \right)-\mathfrak R\left( A{{\sigma }_{g}}B \right)} \\
& \le {{{\sec }^{2}}\alpha \left( \mathfrak{R}A{{\sigma }_{f}}\mathfrak RB \right)-\left( \mathfrak{R} A{{\sigma }_{g}}\mathfrak RB \right)} \\
& ={\mathfrak RA{{\sigma }_{f}}\mathfrak RB-\mathfrak RA{{\sigma }_{g}}\mathfrak RB}+\left({{{\sec }^{2}}\alpha -1}\right)\left( \mathfrak RA{{\sigma }_{f}}\mathfrak RB \right) \\
& ={{D}_{f,g}}\left( \mathfrak RA| \mathfrak RB \right)+\left({{{\sec }^{2}}\alpha -1}\right)\left( \mathfrak RA{{\sigma }_{f}}\mathfrak RB \right),
\end{aligned}\]
which completes the proof.
\end{proof}
We give an example for Theorem \ref{thm_entropy_1}.
If we take $f(x):=\dfrac{x^t-1}{t}+1,\,\,(0<t\le 1)$ and $g(x):=1$ in Theorem \ref{thm_entropy_1}, then we have
$$
D_t(\mathfrak R A|\mathfrak R B)+(1-\sec^2\alpha)\;\mathfrak R A\le \mathfrak R\left( D_t(A|B)\right)
\le \sec^2\alpha\;D_t(\mathfrak R A|\mathfrak R B)+(\sec^2\alpha-1)\mathfrak R A.
$$
Since it is known the relation $D_t(\mathfrak R A|\mathfrak R B) \le \mathfrak R\left( D_t(A|B)\right)$ for accretive matrices $A, B$ and $0<t<1$ in \cite{rais}, the lower bound of $\mathfrak R\left( D_t(A|B)\right)$ in the inequalities above does not give a refined bound. However, we obtain the upper bound of $\mathfrak R\left( D_t(A|B)\right)$.
Finally, we have the following double inequality, which bounds $ D_{f,g}(A|B)$ between certain differences between the harmonic mean $!_t$ and the arithmetic mean $\nabla_t.$
\begin{theorem}\label{thm_entropy_2}
Let $A,B\in \Pi _{n}^{\alpha }$ and $f,g\in\mathfrak{m}$ be such that $f'(1)=g'(1)=t$. Then
\[{{{\cos }^{2}}\alpha \;\mathfrak R\left( A{{!}_{t}}B \right)-{{\sec }^{2}}\alpha \;\mathfrak R\left( A{{\nabla }_{t}}B \right)}\le \mathfrak R\left( {{D}_{f,g}}\left( A|B \right) \right)\le {{{\sec }^{2}}\alpha \;\mathfrak R\left( A{{\nabla }_{t}}B \right)-{{\cos }^{2}}\alpha \;\mathfrak R\left( A{{!}_{t}}B \right)}.\]
\end{theorem}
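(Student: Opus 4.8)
The plan is to reduce everything to Lemma~\ref{thm_amgmhm_accretive}, applied separately to the two monotone functions $f$ and $g$. The starting point is the additive splitting of the real part, namely $\mathfrak{R}(D_{f,g}(A|B)) = \mathfrak{R}(A\sigma_f B) - \mathfrak{R}(A\sigma_g B)$, which is immediate since $\mathfrak{R}$ is real-linear and $D_{f,g}(A|B) = A\sigma_f B - A\sigma_g B$ by definition.

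Since both $f$ and $g$ lie in $\mathfrak{m}$ and satisfy $f'(1) = g'(1) = t$, Lemma~\ref{thm_amgmhm_accretive} supplies, for each of them, the two-sided estimate
$$\cos^2\alpha\;\mathfrak{R}(A!_t B) \le \mathfrak{R}(A\sigma_f B) \le \sec^2\alpha\;\mathfrak{R}(A\nabla_t B),$$
together with the identical estimate obtained by replacing $f$ with $g$. Crucially, because the shared derivative $t$ is what enters the lemma, the harmonic term $A!_t B$ and the arithmetic term $A\nabla_t B$ are literally the same in both estimates, so the four bounds live on a common scale and can be combined cleanly.

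For the upper bound I would bound $\mathfrak{R}(A\sigma_f B)$ from above by $\sec^2\alpha\;\mathfrak{R}(A\nabla_t B)$ and bound the subtracted term $\mathfrak{R}(A\sigma_g B)$ from below by $\cos^2\alpha\;\mathfrak{R}(A!_t B)$; negating the latter and adding yields exactly $\sec^2\alpha\;\mathfrak{R}(A\nabla_t B) - \cos^2\alpha\;\mathfrak{R}(A!_t B)$. For the lower bound I would reverse the roles: bound $\mathfrak{R}(A\sigma_f B)$ from below by $\cos^2\alpha\;\mathfrak{R}(A!_t B)$ and bound $\mathfrak{R}(A\sigma_g B)$ from above by $\sec^2\alpha\;\mathfrak{R}(A\nabla_t B)$, whose negation gives $-\sec^2\alpha\;\mathfrak{R}(A\nabla_t B)$, producing the claimed lower bound $\cos^2\alpha\;\mathfrak{R}(A!_t B) - \sec^2\alpha\;\mathfrak{R}(A\nabla_t B)$.

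There is no genuine obstacle here; the entire content is the bookkeeping of pairing the \emph{outer} bound on the $\sigma_f$ term with the \emph{opposite inner} bound on the subtracted $\sigma_g$ term, and then switching both choices for the reverse inequality. The one point deserving a brief remark is that the hypothesis $f'(1)=g'(1)=t$ should place $t$ in the interval $(0,1)$ on which Lemma~\ref{thm_amgmhm_accretive} is stated, so that both applications of that lemma are legitimate; once that is noted, the result follows by the two symmetric chains of inequalities described above.
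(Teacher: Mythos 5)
Your proposal is correct and is essentially identical to the paper's own proof: both split $\mathfrak{R}(D_{f,g}(A|B))$ as $\mathfrak{R}(A\sigma_f B)-\mathfrak{R}(A\sigma_g B)$ and then apply Lemma~\ref{thm_amgmhm_accretive} to $f$ and $g$ separately, pairing the upper bound on one term with the lower bound on the other (and vice versa for the reverse inequality). Your closing remark that $f'(1)=g'(1)=t$ must lie in $(0,1)$ for the lemma to apply is a point the paper leaves implicit, but it changes nothing in the argument.
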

\begin{proof}
Noting Lemma \ref{thm_amgmhm_accretive}, we have
\[\begin{aligned}
\mathfrak R\left( {{D}_{f,g}}\left( A|B \right) \right)&={\mathfrak R\left( A{{\sigma }_{f}}B \right)-\mathfrak R\left( A{{\sigma }_{g}}B \right)} \\
& \le {{{\sec }^{2}}\alpha \;\mathfrak R\left( A{{\nabla }_{t}}B \right)-{{\cos }^{2}}\alpha \;\mathfrak R\left( A{{!}_{t}}B \right),}
\end{aligned}\]
and
\[\begin{aligned}
\mathfrak R\left( {{D}_{f,g}}\left( A|B \right) \right)&={\mathfrak R\left( A{{\sigma }_{f}}B \right)-R\left( A{{\sigma }_{g}}B \right)} \\
& \ge {{{\cos }^{2}}\alpha \;\mathfrak R\left( A{{!}_{t}}B \right)-{{\sec }^{2}}\alpha \;\mathfrak R\left( A{{\nabla }_{t}}B \right).}
\end{aligned}\]
This completes the proof.
\end{proof}
Taking $f(x):=(1-t)+tx$ and $g(x):=\{(1-t)+tx^{-1}\}^{-1}$ in Theorem \ref{thm_entropy_2}, then we obtain
$\cos^2\alpha \;\mathfrak R\left( A!_tB\right) \le \mathfrak R\left(A\nabla_tB\right)$ which is a special case of the inequality in Lemma \ref{thm_amgmhm_accretive}. If we take $f(x):=\{(1-t)+tx^{-1}\}^{-1}$ and $g(x):=(1-t)+tx$ in Theorem \ref{thm_entropy_2}, then we obtain the same inequality.
If we take $f(x):=(1-t)+tx$ and $g(x):=x^t$ in Theorem \ref{thm_entropy_2}, then we obtain
$$
(1-\sec^2\alpha)\;\mathfrak R\left(A\nabla_t B\right)+\cos^2\alpha\;\mathfrak R\left(A!_t B\right)\le
\mathfrak R\left(A\sharp_t B\right) \le (1+\sec^2\alpha)\;\mathfrak R\left(A\nabla_t B\right)-\cos^2\alpha \;\mathfrak R\left(A!_t B\right).
$$
If we take $f(x):=x^t$ and $g(x):=\{(1-t)+tx^{-1}\}^{-1}$ in Theorem \ref{thm_entropy_2}, then we obtain
$$
(1+\sec^2\alpha)\;\mathfrak R\left(A!_t B\right)-\sec^2\alpha\;\mathfrak R\left(A\nabla_t B\right)\le
\mathfrak R\left(A\sharp_t B\right) \le \sec^2\alpha \;\mathfrak R\left(A\nabla_t B\right)+(1-\cos^2\alpha)\; \mathfrak R\left(A!_t B\right).
$$
However, we find from the inequality $\cos^2\alpha \;\mathfrak R\left( A!_tB\right) \le \mathfrak R\left(A\nabla_tB\right)$ that both inequalities above do not improve the known inequality \cite{Tan2}:
$$
\cos^2\alpha\;\mathfrak R\left(A!_t B\right)\le \mathfrak R\left(A\sharp_t B\right)\le \sec^2\alpha\;\mathfrak R\left(A\nabla_t B\right).
$$
Finally, we give bounds of the weighted logarithmic mean for sectorial matrices $A,B$ using Theorem \ref{thm_entropy_2}. To this end, we review the representing function of the weighted logarithmic mean \cite{PSMA2016} given by
$$
\ell_t(x):=\frac{1-t}{t}\frac{x^t-1}{\log x}+\frac{t}{1-t}\frac{x-x^t}{\log x},\,\,(x>0,\,\,0<t<1).
$$
For $A,B>0$ and $0<t<1$, the operator version of the weighted logarithmic mean can be defined as
$$
A\ell_t B:=\frac{1-t}{t} \int_0^t A\sharp_pBdp+\int_{t}^1A\sharp_pBdp.
$$
Then we have the following corollary.
\begin{corollary}
Let $A,B\in \Pi _{n}^{\alpha }$. Then 
\begin{eqnarray*}
&& \cos^2\alpha \;\mathfrak R\left( A!_tB \right)+\mathfrak R\left( A\sharp_tB \right)-\sec^2
\alpha \;\mathfrak R\left( A\nabla_tB \right)\\
&&\le \mathfrak R\left(A\ell_tB\right)\\
&&\le \sec^2\alpha \;\mathfrak R\left( A\nabla_tB \right)+\mathfrak R\left( A\sharp_tB \right)-\cos^2\alpha \;\mathfrak R\left( A!_tB \right).
\end{eqnarray*}
\end{corollary}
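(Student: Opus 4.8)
The plan is to realize $\mathfrak R(A\ell_t B)$ as a shifted difference of two perspectives and then invoke Theorem \ref{thm_entropy_2}. Setting $g(x)=x^t$, so that $A\sigma_g B=A\sharp_t B$, and taking $f=\ell_t$ to be the representing function of the weighted logarithmic mean, we get $D_{f,g}(A|B)=A\sigma_f B-A\sigma_g B=A\ell_t B-A\sharp_t B$. Hence $\mathfrak R\left(D_{f,g}(A|B)\right)=\mathfrak R(A\ell_t B)-\mathfrak R(A\sharp_t B)$, and the corollary will follow directly from Theorem \ref{thm_entropy_2} by adding $\mathfrak R(A\sharp_t B)$ to each of the three terms of its double inequality: the lower bound $\cos^2\alpha\,\mathfrak R(A!_tB)-\sec^2\alpha\,\mathfrak R(A\nabla_tB)$ becomes the claimed lower bound for $\mathfrak R(A\ell_tB)$, and similarly for the upper bound.

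To apply Theorem \ref{thm_entropy_2} with this choice, I must check its hypotheses, namely $f,g\in\mathfrak m$ and $f'(1)=g'(1)=t$. For $g(x)=x^t$ with $0<t<1$ both facts are standard: $x^t\in\mathfrak m$ and $g'(1)=t$. The matrix monotonicity $\ell_t\in\mathfrak m$, together with the normalization $\ell_t(1)=1$, is exactly the content of the representation in \cite{PSMA2016}, where the weighted logarithmic mean is realized as an operator mean with representing function $\ell_t$; the normalization $\ell_t(1)=1$ also follows from a one-line limit, since $\lim_{x\to1}\frac{x^t-1}{\log x}=t$ and $\lim_{x\to1}\frac{x-x^t}{\log x}=1-t$ give $\ell_t(1)=\frac{1-t}{t}\cdot t+\frac{t}{1-t}\cdot(1-t)=1$.

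The one computation that needs care is $\ell_t'(1)=t$. Writing $h(x)=\frac{x^t-1}{\log x}$ and $k(x)=\frac{x-x^t}{\log x}$, so that $\ell_t=\frac{1-t}{t}h+\frac{t}{1-t}k$, I would expand $x^t$ and $\log x$ in powers of $x-1$. This yields $h(x)=t+\frac{t^2}{2}(x-1)+O\!\left((x-1)^2\right)$ and $k(x)=(1-t)+\frac{(1-t)(1+t)}{2}(x-1)+O\!\left((x-1)^2\right)$, so that $h'(1)=\frac{t^2}{2}$ and $k'(1)=\frac{(1-t)(1+t)}{2}$. Substituting gives $\ell_t'(1)=\frac{1-t}{t}\cdot\frac{t^2}{2}+\frac{t}{1-t}\cdot\frac{(1-t)(1+t)}{2}=\frac{t(1-t)}{2}+\frac{t(1+t)}{2}=t$, so the hypothesis $f'(1)=g'(1)=t$ holds.

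The main obstacle is therefore not the final algebra, which is immediate, but the verification that $\ell_t$ genuinely lies in $\mathfrak m$ with the correct normalization and the correct slope $\ell_t'(1)=t$ at the point $1$; this is what licenses the use of Theorem \ref{thm_entropy_2} with the specific pair $f=\ell_t$, $g(x)=x^t$. Once the identity $\ell_t'(1)=t$ is established, I substitute $D_{f,g}(A|B)=A\ell_tB-A\sharp_tB$ into both bounds of Theorem \ref{thm_entropy_2} and add $\mathfrak R(A\sharp_tB)$ throughout to obtain precisely the stated double inequality.
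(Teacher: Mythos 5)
Your proposal is correct and takes essentially the same route as the paper: both arguments reduce the corollary to Theorem \ref{thm_entropy_2} applied with $f=\ell_t$ and $g(x)=x^t$, after verifying $\ell_t\in\mathfrak{m}$, $\ell_t(1)=1$, and $\ell_t'(1)=t$, and then shifting by $\mathfrak R(A\sharp_t B)$. The only cosmetic differences are that you compute $\ell_t'(1)$ by Taylor expansion (obtaining the same values $h'(1)=\frac{t^2}{2}$, $k'(1)=\frac{1-t^2}{2}$) where the paper uses L'Hopital's rule, and you cite \cite{PSMA2016} for the operator monotonicity of $\ell_t$ where the paper proves it directly from the integral representation $\frac{x^t-1}{\log x}=\int_0^t x^p\,dp$ and Fubini's theorem.
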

\begin{proof}
We show $\left.\dfrac{\ell_t(x)}{dx}\right|_{x\to 1}=t$.
By elementary calculations, we have
$$
\frac{d}{dx}\left(\frac{x^t-1}{\log x}\right)=\frac{1-x^t+x^t\log x^t}{x(\log x)^2},\quad
\frac{d}{dx}\left(\frac{x-x^t}{\log x}\right)=\frac{x^t-x+x\log x-x^t\log x^t}{x(\log x)^2}.
$$
Applying L'Hopital's rule, we have
$$\lim_{x\to 1} \frac{1-x^t+x^t\log x^t}{x(\log x)^2}=\lim_{x\to 1}\frac{tx^{t-1}\log x^t}{2\log x+(\log x)^2}=\lim_{x\to 1} \frac{t^2x^{t-1}-t(1-t)x^{t-1}\log x^t}{2+2\log x}=\frac{t^2}{2}.$$
Since we have similarly
$$
\lim_{x\to 1}\frac{x^t-x+x\log x-x^t\log x^t}{x(\log x)^2}=\frac{1-t^2}{2},
$$
 we have
$$\left.\dfrac{\ell_t(x)}{dx}\right|_{x\to 1}=\frac{1-t}{t}\times\frac{t^2}{2}+\frac{t}{1-t}\times\frac{1-t^2}{2}=t.$$
We also show $\ell_t(x)\in \mathfrak{m}$. It is trivial $\lim\limits_{x\to 1}\ell_t(x)=1$.
We take a spectral decomposition of the bounded linear operator $A\ge 0$ as $A=\int_0^{\infty}\lambda dE_{\lambda}$. For a continuous function $f:(0,\infty)\to (0,\infty)$, we have $f(A)=\int_0^{\infty}f(\lambda)dE_{\lambda}$ by a standard functional calculus. From Fubini's theorem with $f_1(x):=\dfrac{x^t-1}{\log x}=\int_0^{t}x^pdp$, we have for any vector $u\in \mathcal{H}$
\begin{eqnarray*}
&& \langle f_1(A)u,u\rangle =\langle \int_0^{\infty}f_1(\lambda)dE_{\lambda} u,u\rangle =\langle \int_0^{\infty}\int_0^{t}\lambda^pdpdE_{\lambda} u,u\rangle\\
&&\qquad\qquad \quad=\langle \int_0^{t}\int_0^{\infty}\lambda^pdE_{\lambda}dp u,u\rangle=\int_0^t\langle A^pu,u\rangle dp.
\end{eqnarray*}
From $0<t<1$, we have $0<p<1$. Then we have $0\le A\le B \Longrightarrow f_1(A)\le f_1(B)$. Similarly we have $0\le A\le B \Longrightarrow f_2(A)\le f_2(B)$ for the function $f_2(x):=\dfrac{x-x^t}{\log x}=\int_t^1x^pdp$. Therefore $f(x)=\dfrac{1-t}{t}f_1(x)+\dfrac{t}{1-t}f_2(x)\in \mathfrak{m}$ for $0<t<1$. Thus we can apply Theorem \ref{thm_entropy_2} with $f(x):=\ell_t(x)$ and $g(x):=x^t$ and then we obtain the desired inequalities.
\end{proof}
To our knowledge, the bounds for the weighted logarithmic mean for the positive matrices case and/or scalar case have not been known yet; see \cite{FA2021,FYM2021} for example.

As we have seen, Theorem \ref{thm_entropy_1} and Theorem \ref{thm_entropy_2} give some interesting bounds using appropriate functions easily, although their estimations are not so sharp. We gave there the general forms for general functions.
\subsection*{Declarations}
\begin{itemize}
\item {\bf{Availability of data and materials}}: Not applicable
\item {\bf{Competing interests}}: The authors declare that they have no competing interests.
\item {\bf{Funding}}: This research is supported by a grant (JSPS KAKENHI, Grant Number: 21K03341) awarded to the author, S. Furuichi.
\item {\bf{Authors' contributions}}: Authors declare that they have contributed equally to this paper. All authors have read and approved this version.
\item {\bf{Acknowledgments}}: Not applicable.
\end{itemize}

\vskip 0.5 true cm

{\tiny (H. R. Moradi) Department of Mathematics, Payame Noor University (PNU), P.O. Box, 19395-4697, Tehran, Iran

	\textit{E-mail address:} hrmoradi@mshdiau.ac.ir}
	
	\vskip 0.3 true cm

{\tiny (S. Furuichi) Department of Information Science, College of Humanities and Sciences, Nihon University, Setagaya-ku, Tokyo, Japan}
{\tiny \textit{E-mail address:} furuichi.shigeru@nihon-u.ac.jp}

\vskip 0.3 true cm

{\tiny (M. Sababheh) Vice president, Princess Sumaya University for Technology, Amman, Jordan}

{\tiny\textit{E-mail address:} sababheh@psut.edu.jo}

%-----------------------------------------------------------------------------
%-----------------------------------------------------------------------------

\begin{thebibliography}{9}
\bibitem{6}
M. Alakhrass, M. Sababheh, {\it Lieb functions and sectorial matrices}, Linear Algebra Appl., {\bf 586} (2020), 308--324.
\bibitem{ando_hiai}
T. Ando, F. Hiai, {\it Operator log-convex functions and operator means}, Math. Ann., \textbf{350} (2011), 611--630.
\bibitem{Ando}
T. Ando, {\it Concavity of certain maps on positive definite matrices and applications to Hadamard products}, Linear Algebra Appl., \textbf{26} (1979), 203--241.
\bibitem{ando_1}
T. Ando, {\it Topics on operator inequalities}, Lecture Note, Sapporo, 1978.
\bibitem{1}
Y. Bedrani, F. Kittaneh, and M. Sababheh. {\it From positive to accretive matrices}, Positivity., {\bf 25}(4) (2021), 1601--1629.
\bibitem{bedann}
Y. Bedrani, F. Kittaneh, and M. Sababheh, {\it On the weighted geometric mean of accretive matrices}, Ann. Funct. Anal., {\bf 12}(2) (2021). \url{https://doi.org/10.1007/s43034-020-00094-6}.
\bibitem{bedlama}
Y. Bedrani, F. Kittaneh, and M. Sababheh, {\it Numerical radii of accretive matrices}, Linear Multilinear Algebra., {\bf 69}(5) (2021), 957--970.
\bibitem{bedrim}
Y. Bedrani, F. Kittaneh, and M. Sababheh, {\it Accretive matrices and matrix convex functions}, Results Math., 77, 52 (2022). \url{https://doi.org/10.1007/s00025-021-01590-4}.
\bibitem{bhatia}
R. Bhatia, {\it Matrix analysis}, Springer Verlag, New York, 1997.
\bibitem{x3}
R. Bhatia, {\it Positive definite matrices}, Princeton Univ. Press, Princeton, 2007.
\bibitem{choi}
M. D. Choi, {\it A Schwarz inequality for positive linear maps on $C^*-$algebras}, Illinois J. Math., {\bf18} (1974), 565--574.
\bibitem{davis}
C. Davis, {\it A Schwarz inequality for convex operator functions}, Proc. Amer. Math. Soc., {\bf 8} (1957), 42--44.
\bibitem{drury}
S. Drury, {\it Principal powers of matrices with positive definite real part}, Linear Multilinear Algebra., {\bf 63} (2015), 296--301.
\bibitem{drury_lin}
S. Drury and M. Lin, {\it Singular value inequalities for matrices with numerical ranges in a sector}, Oper. Matrices., {\bf 8} (2014), 1143--1148.
\bibitem{FUJII}
J. Fujii, M. Fujii, and R. Nakamoto, {\it Riccati equation and positivity of operator matrices}, Kyungpook Math. J., {\bf 49}(4) (2009), 595--603.
\bibitem{2}
S. Furuichi, K. Yanagi, and K. Kuriyama, {\it A note on operator inequalities of Tsallis relative operator entropy}, Linear Algebra Appl., {\bf 407} (2005) 19--31.
\bibitem{FA2021}
S. Furuichi and M. E. Amlashi, {\it More results on weighted means}, to be published in J. Math. Ineq., \url{https://arxiv.org/abs/2111.01128}.
\bibitem{FYM2021}
S. Furuichi, K. Yanagi and H. R. Moradi, {\it Mathematical inequalities on some weighted means}, \url{https://arxiv.org/abs/2110.06493}.
\bibitem{book1}
T. Furuta, J. Mi\'ci\'c, J. Pe\v cari\'c, and Y. Seo, {\it Mond--Pe\v cari\'c method in operator inequalities}, Element, Zagreb, 2005.
\bibitem{hansen1}
F. Hansen, {\it The fast track to Lowner's theorem}, Linear Algebra Appl., {\bf 438} (2013), 4557--4571.
\bibitem{10}
L. Horv\'ath, K. A. Khan, and J. Pe\v cari\'c, {\it Cyclic refinements of the different versions of operator Jensen's inequality}, Electron. J. Linear Algebra., {\bf 31}(1) (2016), 125--133.
\bibitem{johnson}
C. R. Johnson, {\it Matrices whose hermitian part is positive definite}, (Doctoral dissertation, California Institute of Technology), 1972.
\bibitem{kar}
G. Karamali, H. R. Moradi, and M. Sababheh, {\it More About operator order preserving}, Rocky mountain J. Math., {\bf 51}(5)(2021), 1691--1699.
\bibitem{x2}
F. Kittaneh, {\it Notes on some inequalities for Hilbert space operators}, Publ. Res. Inst. Math. Sci., {\bf 24} (1988), 283--293.
\bibitem{3}
F. Kubo and T. Ando, {\it Means of positive linear operators}, Math. Ann., {\bf 246} (1980), 205--224.
\bibitem{Lin1}
M. Lin, {\it Some inequalities for sector matrices}. Oper. Matrices., \textbf{10} (2016), 915--921.
\bibitem{5}
R. A. Horn and R. Mathias, {\it Cauchy-Schwarz inequalities associated with positive semidefinite matrices}, Linear Algebra Appl., {\bf 142} (1990), 63--82.
\bibitem{Mathias1992}
R. Mathias, {\it Matrices with positive definite Hermitian part: inequalities and linear systems}, SIAM J. Matrix Anal. Appl., {\bf 13} (1992), 640--654.
\bibitem{n1}
J. Mi\'ci\'c, J. Pe\v cari\'c, and Y. Seo, {\it Function order of positive operators based on the Mond--Pe\v cari\'c method}, Linear Algebra Appl., {\bf 360} (2003), 15--34.
\bibitem{12}
B. Mond and J. Pe\v cari\'c, {\it On Jensen's inequality for operator convex functions}, Houston J. Math., {\bf 21} (1995), 739--753.
\bibitem{mos}
M. S. Moslehian, J. Matharu, and J. Aujla, {\it Non-commutative Callebaut inequality}, Linear Algebra Appl., {\bf 436} (2012), 3347--3353.
\bibitem{nis}
K. Nishio and T. Ando, {\it Characterizations of operations derived from network connections}, J. Math. Anal. Appl., {\bf{53}} (1976), 539--549.
\bibitem{n2}
J. Pe\v cari\'c and J. Mi\'ci\'c, {\it Some functions reversing the order of positive operators}, Linear Algebra Appl., {\bf 396} (2005), 175--187.
\bibitem{PSMA2016} 
R. Pal, M. Singh, M. S. Moslehian, and J. S. Aujla, {\it A new class of operator monotone functions via operator means}, Linear Multilinear Algebra., {\bf 64}(12) (2016), 2463--2473.
\bibitem{pus}
W. Pusz and S. L. Woronowicz, {\it Functional calculus for sesquilinear forms and the purification map}, Rep. Math. Phys., {\bf{8}} (1975), 159--170.
\bibitem{rais}
M. Ra\"{i}ssouli, M. S. Moslehian, and S. Furuichi, {\it Relative entropy and Tsallis entropy of two accretive operators}, C. R. Acad. Sci. Paris Ser. I., {\bf 355} (2017), 687--693.
\bibitem{bel}
M. Sababheh, H. R. Moradi, and S. Furuichi, {\it Reversing Bellman operator inequality}, J. Math. Inequal., {\bf 14} (2020), 577--584.
\bibitem{hac}
M. Sababheh and H. R. Moradi, {\it Separated spectra and operator inequalities}, Hacettepe J. Math. Stat., {\bf 50}(4) (2021), 982--990.
\bibitem{x7}
M. Sababheh, H. R. Moradi, and Z. Heydarbeygi, {\it Buzano, Kre\u\i n and Cauchy-Schwarz inequalities}, Oper. Matrices., 16(1) (2022), 239--250.
\bibitem{sano}
T. Sano and K. Sugawara, {\it Operator means for operators with positive definite real part}, Adv. Oper. Theory, \url{https://doi.org/10.1007/s43036-019-00017-w}.
\bibitem{Uchiyama}
M. Uchiyama, {\it Operator monotone functions, positive definite kernels and majorization}, Proc. Amer. Math. Soc., {\bf 138}(11) (2010), 3985--3996.
\bibitem{Tan1}
F. Tan, H. Chen, {\it Inequalities for sector matrices and positive linear maps}, Electron. J. Linear Algebra., {\bf 34} (2019), 418--423.
\bibitem{Tan2}
F. Tan and A. Xie, {\it An extension of the AM--GM--HM inequality}, Bull. Iran. Math. Soc., {\bf 46} (2020), 245--251.
\bibitem{7}
F. Zhang, {\it A matrix decomposition and its applications}, Linear Multilinear Algebra., {\bf 63} (2015), 2033--2042.
\end{thebibliography}
\end{document}